\begin{document}

\setlength{\baselineskip}{1.3\baselineskip}
\setlength{\parskip}{0.2\baselineskip}

\newtheorem{theorem}{Theorem}
\newtheorem{corollary}[theorem]{Corollary}
\newtheorem{lemma}[theorem]{Lemma}
\newtheorem*{numlesstheorem}{Theorem $\textrm{2}'$}
\renewcommand{\thefootnote}{\alph{footnote}}

\newcommand{\beqaa}{\begin{eqnarray}}
\newcommand{\beqae}{\begin{eqnarray*}}
\newcommand{\eeqae}{\end{eqnarray*}}
\newcommand{\eeqaa}{\end{eqnarray}}

\newcommand{\con}{\rightarrow}
\newcommand{\pbrack}[1]{\left( {#1} \right)}
\newcommand{\sbrack}[1]{\left[ {#1} \right]}
\newcommand{\key}[1]{\left\{ {#1} \right\}}
\newcommand{\remark}{{\bf Remark}\quad}
\newcommand{\example}{\noindent {\bf Example:\ }}
\newcommand{\ds}{\displaystyle}

\newcommand{\R}{{\mathbb R}}
\newcommand{\C}{{\mathbb C}}
\newcommand{\N}{{\mathbb N}}
\newcommand{\Z}{{\mathbb Z}}


\title{On Stability of Volterra Difference Equations of Convolution 
Type}
\author{
\small \bf Higidio Portillo Oquendo,  Jos\'e R. Ramos Barbosa\\
\small Federal University of Paran\'a, Brazil \\
\small and \\
\small \bf Patricia S\'anez Pacheco\\
\small Federal Technological University of Paran\'a, Brazil
}
\date{}
\maketitle

\let\thefootnote\relax\footnote{{\it Email address:} {\rm higidio@ufpr.br} (Higidio Portillo Oquendo), {\rm jrrb@ufpr.br} (Jos\'e R. Ramos Barbosa), {\rm patricias@utfpr.edu.br} (Patricia S\'anez Pacheco)}

\begin{abstract}
In \cite{Elaydi-10}, S.\ Elaydi obtained a characterization of the stability of
the null solution of the Volterra difference equation
\beqae
x_n=\sum_{i=0}^{n-1} a_{n-i} x_i\textrm{,}\quad n\geq 1\textrm{,}
\eeqae
by localizing the roots of its characteristic equation
\beqae
1-\sum_{n=1}^{\infty}a_nz^n=0\textrm{.}
\eeqae
The assumption that $(a_n)\in\ell^1$ was the single hypothesis considered for 
the validity of that characterization, which is an insufficient condition if the
ratio $R$ of convergence of the power series of the previous equation equals 
one. In fact, when $R=1$, this characterization conflicts with a result obtained
by Erd\"os et al in \cite{Erdos}. Here, we analyze the $R=1$ case and show that
some parts of that characterization still hold. Furthermore, studies on 
stability for the $R<1$ case are presented. Finally, we state some new results 
related to stability via finite approximation.
\end{abstract}

\medskip
\noindent {\bf Keywords:} {\small difference equation, stability, convolution.}


\section{Introduction}
\noindent In the present work, we analyze the stability of the null solution of Volterra difference equations of convolution type,
\beqaa\label{Eq-Recu-10}
x_n=\sum_{i=0}^{n-1}a_{n-i}x_i,\quad n\geq1\textrm{,}
\eeqaa
whose recursive process starts at $x_0\in\R$. Several results related to 
this subject matter circulates in the specialized scientific literature. One of
most well-known is the following theorem:
\begin{theorem}[See \cite{Kolmanovskii}]\label{Teo-Abs-10}
If $\ds\sum_{n=1}^{\infty}|a_n|<1$, then the null solution of (\ref{Eq-Recu-10})
is asymptotically stable.
\end{theorem}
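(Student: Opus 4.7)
The plan is to control the solution by the running supremum, exploit the strict contraction given by $A:=\sum_{n=1}^{\infty}|a_n|<1$, and then improve a uniform bound to decay by a limsup argument.

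First I would establish stability (and boundedness). Set $M_n=\max_{0\le i\le n}|x_i|$. Applying the triangle inequality to (\ref{Eq-Recu-10}) yields
\beqae
|x_n|\le\sum_{i=0}^{n-1}|a_{n-i}||x_i|\le M_{n-1}\sum_{k=1}^{n}|a_k|\le A\,M_{n-1}.
\eeqae
Since $A<1$, this forces $M_n=M_{n-1}$ for all $n\ge 1$, hence $|x_n|\le|x_0|$ for every $n$. Choosing $\delta=\varepsilon$ in the definition of Lyapunov stability settles that part.

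For asymptotic stability, let $L=\limsup_{n\to\infty}|x_n|$; boundedness ensures $L<\infty$. Given $\varepsilon>0$, pick $N$ so large that $|x_i|<L+\varepsilon$ for all $i\ge N$, then split
\beqae
|x_n|\le\sum_{i=0}^{N-1}|a_{n-i}||x_i|+\sum_{i=N}^{n-1}|a_{n-i}||x_i|.
\eeqae
The second sum is bounded by $A(L+\varepsilon)$. For the first sum, I would use that $(a_n)\in\ell^1$ implies $a_n\to 0$, so for each fixed $i<N$ the coefficient $|a_{n-i}|\to 0$ as $n\to\infty$; since $N$ is fixed and $|x_i|\le|x_0|$, the first sum vanishes in the limit. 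Taking $\limsup$ gives $L\le A(L+\varepsilon)$, and letting $\varepsilon\to 0$ yields $L(1-A)\le 0$, i.e.\ $L=0$.

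The proof is essentially routine; the only place where one must be slightly careful is handling the initial-segment contribution in the limsup step, making sure to freeze $N$ before letting $n\to\infty$ so that the decay $a_{n-i}\to 0$ (together with the finite number of terms) can be used. Everything else follows directly from the strict contraction $A<1$.
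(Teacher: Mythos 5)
Your proof is correct and complete: the running-maximum argument gives $|x_n|\le|x_0|$ and hence stability, and the split of the convolution at a frozen index $N$ combined with $a_n\to 0$ correctly yields $L\le A(L+\varepsilon)$ and thus $L=0$. Note that the paper itself offers no proof of this statement --- it is quoted from the survey \cite{Kolmanovskii} as a known result --- so there is nothing internal to compare against; your contraction-plus-limsup argument is the standard one and stands on its own.
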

We now present another important characterization of the stability of the null
solution of (\ref{Eq-Recu-10}) as obtained by S.\ Elaydi in \cite{Elaydi-10}.
Let $(x_n)$ be a solution of (\ref{Eq-Recu-10}) with initial condition $x_0=1$.
Consider the two power series
\beqae
x(z):=\sum_{n=0}^{\infty}x_nz^n,\quad a(z):=\sum_{n=1}^{\infty}a_nz^n\textrm{.}
\eeqae
Then, formally, such series satisfies
\beqaa\label{Eq-Ser-Pot-10}
x(z)(1-a(z))=1\textrm{.}
\eeqaa
Thus the coefficients of $x(z)$ can be found by determining the coefficients of
the power series representation of the function $(1-a(z))^{-1}$. Hence the roots
of the characteristic equation
\beqaa\label{Caract-10}
1-\sum_{n=1}^{\infty}a_nz^n=0
\eeqaa
play an important role in this sense. By this reasoning, S.\ Elaydi obtained 
necessary and sufficient conditions for the stability of the null solution
by localizing the roots of $1-a(1/z)=0$ with respect to the set
$\{z\in\C\,:\,\|z\|\geq1\}$. We now enunciate the result obtained by Elaydi with
a variable change by writing $z$ in place of $1/z$ (as considered in 
\cite{Elaydi-10}). In the following, we use the notation:
\beqae
B_r(z_0)=\{z\in\C : |z-z_0|<r\},\quad r>0\textrm{.}
\eeqae

\begin{theorem}[See \cite{Elaydi-10,Elaydi-2009}]\label{Teo-Elaydi-10}
Let $(a_n)\in\ell^1$. Then:
\begin{itemize}
\item[{\normalfont (a)}] The null solution of (\ref{Eq-Recu-10}) is stable if, and only if, the
characteristic equation (\ref{Caract-10}) has no roots in $B_1(0)$ and its
possible roots in $|z|=1$ are of order $1$.
\item[{\normalfont (b)}] The null solution of (\ref{Eq-Recu-10}) is asymptotically stable if, and
only if, the characteristic equation (\ref{Caract-10}) has no roots in
$\overline{B_1(0)}$.
\end{itemize}
\end{theorem}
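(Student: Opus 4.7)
The plan is to reduce the stability of the null solution to the asymptotic behavior of the sequence $(x_n)$ generated by $x_0=1$, and then to apply complex-analytic techniques to the generating function $x(z)=\sum_{n=0}^{\infty}x_nz^n$. By linearity of (\ref{Eq-Recu-10}), stability of the null solution is equivalent to boundedness of $(x_n)$, while asymptotic stability is equivalent to $x_n\to 0$. The central tool is the identity $(1-a(z))x(z)=1$, which holds as a formal identity of power series and, in particular, as an identity of analytic functions on any disk where both sides converge. Under the hypothesis $(a_n)\in\ell^1$, the function $a$ is analytic on $B_1(0)$ and continuous on $\overline{B_1(0)}$, so $a$ lies in the Wiener algebra on the closed disk.

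For the sufficient direction of (b), I would apply Wiener's inversion theorem to $1-a(z)$: if it has no zeros on $\overline{B_1(0)}$, then $(1-a(z))^{-1}$ lies in the Wiener algebra, so its Taylor coefficients are absolutely summable. By uniqueness of Taylor expansions, these coefficients coincide with $(x_n)$, which forces $(x_n)\in\ell^1$ and in particular $x_n\to 0$. For the sufficient direction of (a), assume $1-a(z)$ has no zeros in $B_1(0)$ and that its zeros $\xi_1,\dots,\xi_k$ on $|z|=1$ are simple. I would isolate the boundary singularities by writing
\beqae
\frac{1}{1-a(z)}=\sum_{j=1}^{k}\frac{c_j}{1-z/\xi_j}+g(z),
\eeqae
with residues $c_j$ chosen so that $g$ belongs to the Wiener algebra and is free of zeros on $\overline{B_1(0)}$. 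Each geometric term contributes Taylor coefficients $c_j\xi_j^{-n}$, which are bounded, while $g$ contributes $\ell^1$-coefficients by Wiener, so $(x_n)$ is bounded.

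For the necessary directions, I would start from the bound $|x_n|\leq M$ (or from $x_n\to 0$ in the asymptotically stable case) to obtain that $x(z)$ is analytic on $B_1(0)$, so that $(1-a(z))x(z)=1$ throughout $B_1(0)$. This immediately rules out zeros of $1-a(z)$ in $B_1(0)$. To handle boundary zeros $\xi$ with $|\xi|=1$, I would appeal to a Tauberian-type argument: at a zero $\xi$ of order $\geq 2$, comparing the growth of $x(z)$ as $z\to\xi$ radially (via an Abelian limit) with what boundedness of $(x_n)$ permits forces a contradiction, and similarly any boundary zero at all is incompatible with $x_n\to 0$ once we combine Abel's theorem applied to $\sum x_n\xi^n$ with the relation $(1-a(z))x(z)=1$.

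The hard part will be this last Tauberian step, since $(a_n)\in\ell^1$ does not provide analyticity of $a$ beyond $\overline{B_1(0)}$ and therefore precludes a direct residue calculation at $\xi$. One must instead extract information on $(x_n)$ from the local behaviour of $a$ at $\xi$ by more delicate boundary-value arguments, quantifying the order of the zero in terms of the Abel mean of $x(z)$ near $\xi$. As the paper suggests, this is precisely the point where the $R=1$ case becomes fragile, so the proof implicitly leans on additional regularity that is automatic when the radius of convergence $R$ of $a(z)$ exceeds one.
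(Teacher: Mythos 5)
Your proposal founders on a point that is the raison d'\^etre of this paper: theorem \ref{Teo-Elaydi-10} is \emph{false} as stated under the bare hypothesis $(a_n)\in\ell^1$. The sequence $a_n=1/(n(n+1))$ of (\ref{Seq-Conf-10}) lies in $\ell^1$ and sums to $1$, so $z=1$ is a root of (\ref{Caract-10}) on $|z|=1$; yet by theorem \ref{Teo-Erdos} (Erd\"os--Feller--Pollard) the null solution is asymptotically stable, contradicting the ``only if'' of item (b). Hence the ``hard Tauberian step'' you defer at the end of your necessity argument cannot be carried out from $(a_n)\in\ell^1$ alone --- no boundary-value refinement will close it, because the implication it is meant to establish is not true. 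The paper's actual proof is of Theorem $\textrm{2}'$, which strengthens the hypothesis to $R>1$: with $a$ analytic on $|z|\leq\rho$ for some $\rho\in\,]1,R[$, it writes $x_n$ via the residue theorem as a contour integral over $|z|=\rho$ (which decays like $\rho^{-n}$) plus the residues of $x(z)/z^{n+1}$ at the boundary zeros, whose explicit form $n^{m_k}\alpha_k e^{-i\theta_k n}+p_k(n)$ yields both directions of (a) and (b) at once. Your approach deliberately avoids this analyticity, but then the necessity directions for boundary zeros are out of reach.

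There is a second concrete gap in your sufficiency argument for (a): the claim that the residues $c_j$ can be chosen so that $g(z)=(1-a(z))^{-1}-\sum_j c_j(1-z/\xi_j)^{-1}$ lies in the Wiener algebra. Dividing an $\ell^1$ power series by $(1-z/\xi_j)$ at a simple boundary zero does not in general return an $\ell^1$ power series, so this is precisely the extra regularity that $(a_n)\in\ell^1$ fails to supply; the paper must \emph{assume} $q\in\mathbb{L}^1$ in the factorization (\ref{Pos-zero-10}) to run essentially your argument (its theorem following the lemma of section 3), and derives that assumption only under the stronger condition $\sum_n n|a_n|<\infty$ in the subsequent corollary. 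On the positive side, two pieces of your plan are sound and even more general than the paper's residue computation: the Wiener-inversion proof of sufficiency in (b), which the paper obtains under continuity alone via the Riemann--Lebesgue lemma in theorem \ref{Teo-Assint-10}, and the exclusion of zeros of $1-a(z)$ in $B_1(0)$ from boundedness of $(x_n)$, which is the paper's theorem \ref{Teo-NoEstavel-10}.
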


\noindent It is worth mentioning that the previous theorem has also appeared as theorems 6.16 and 6.17 in \cite{Elaydi-2005}. Furthermore, as a consequence of 
theorem \ref{Teo-Elaydi-10}, Elaydi set the following result on asymptotic 
instability:

\begin{theorem}[See \cite{Elaydi-10,Elaydi-2009}]\label{Teo-Elaydi-20}
If $(a_n)\in\ell^1$ is a sequence whose terms do not change signs for
$n\geq1$, then the null solution of (\ref{Eq-Recu-10}) is not asymptotically
stable if one of the following conditions is satisfied:
\begin{itemize}
\item[{\normalfont (a)}] $\sum_{n=1}^{\infty}a_n\geq1$;
\item[{\normalfont (b)}] $\sum_{n=1}^{\infty}a_n\leq-1$ and $a_n>0$ for some $n\geq 1$;
\item[{\normalfont (c)}] $\sum_{n=1}^{\infty}a_n\leq -1$ and $a_n<0$ for some $n\geq 1$ and
$\sum_{n=1}^{\infty}a_n$ is sufficiently small.
\end{itemize}
\end{theorem}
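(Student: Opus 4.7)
My plan is to prove each condition implies non-asymptotic-stability by invoking Theorem \ref{Teo-Elaydi-10}(b): it suffices to exhibit a zero of the characteristic function $1 - a(z)$ inside $\overline{B_1(0)}$. Since $(a_n) \in \ell^1$, the series $a(z) = \sum_{n \geq 1} a_n z^n$ converges uniformly on $\overline{B_1(0)}$, so its real restriction $f(x) := 1 - a(x)$ is continuous on $[-1, 1]$, which opens the door to intermediate value arguments.

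For case (a), the sign-constancy of $(a_n)$ combined with $\sum a_n \geq 1 > 0$ forces $a_n \geq 0$ for every $n$. Then $a$ is nondecreasing on $[0, 1]$ with $a(0) = 0$ and $a(1) = \sum a_n \geq 1$, so the intermediate value theorem produces a real root $x_0 \in (0, 1]$ of $f$, and Theorem \ref{Teo-Elaydi-10}(b) concludes.

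For cases (b) and (c) the natural setting is $a_n \leq 0$ for all $n$. Writing $b_n = -a_n \geq 0$, the characteristic equation becomes $1 + \sum b_n z^n = 0$. Since $f(x) \geq 1$ on $[0, 1]$, any real root must lie in $[-1, 0)$. Evaluating at $x = -1$ gives $f(-1) = 1 + \sum_{n \geq 1} (-1)^n b_n$. Whenever the odd-indexed mass dominates, i.e.\ $\sum_{\text{odd}\,n} b_n - \sum_{\text{even}\,n} b_n \geq 1$, IVT on $[-1, 0]$ delivers a root; I expect the hypothesis ``$a_n > 0$ for some $n$'' in (b) (read together with the sign-constancy convention so that it identifies the nontrivial non-positive setting) to be tuned exactly to this mechanism. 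For (c), where $\sum a_n$ is ``sufficiently small,'' the additional quantitative strength of the hypothesis should force $f$ to change sign on $[-1, 0]$ even when cancellation between parities is present: one picks $x \in (-1, 0)$ at which the dominant low-order term $b_1 x$ (or the first genuinely negative term of $b(x)$) is comparable to $-1$ while the tail is controlled by $\sum b_n$ in a prescribed way.

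The main obstacle is case (c). The phrase ``sufficiently small'' is soft, and the pathological subcase in which the mass of $(b_n)$ concentrates on even indices admits no real root at all in $[-1, 0]$, since then $b(x) = \sum b_n x^n \geq 0$ on $[-1, 1]$. In that regime the real-axis IVT is insufficient, and I would fall back on the argument principle applied to $1 + b(z)$ along $|z| = 1$: for $\sum b_n$ large enough, the image curve winds around $-1$, forcing a (necessarily complex) root inside the unit disk. Quantifying precisely how small $\sum a_n$ must be to guarantee that winding is the delicate step.
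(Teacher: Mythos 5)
The paper offers no proof of this theorem: it is quoted from Elaydi precisely in order to exhibit a contradiction with the Erd\"os--Feller--Pollard theorem (Theorem \ref{Teo-Erdos}). Indeed, the sequence $a_n=1/(n(n+1))$ satisfies the hypotheses of item (a) with $\sum_{n\geq1}a_n=1$, yet by Theorem \ref{Teo-Erdos} its null solution \emph{is} asymptotically stable. So the statement, under the bare hypothesis $(a_n)\in\ell^1$, is false in the borderline case $\sum_{n\geq1}a_n=1$ with radius of convergence $R=1$, and no proof of it as written can be correct; the paper's central point is that the stronger hypothesis $R>1$ is needed.

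Your proposal reproduces exactly the flaw the paper is diagnosing. In case (a) with $\sum_{n\geq1}a_n=1$, the intermediate value theorem places the root at $x_0=1$, on the boundary $|z|=1$, and you then invoke Theorem \ref{Teo-Elaydi-10}(b) in the direction ``root in $\overline{B_1(0)}$ implies not asymptotically stable.'' That implication is precisely what fails when $a(z)$ is not analytic on the unit circle, with $a_n=1/(n(n+1))$ as the counterexample. (When $\sum_{n\geq1}a_n>1$ strictly, your IVT argument does yield a root strictly inside $B_1(0)$ and the conclusion holds; this is the paper's Corollary \ref{MaiorUm-10}, which gives outright instability and does not rely on Theorem \ref{Teo-Elaydi-10} at all.) Beyond this, your treatment of (b) and (c) is not a proof. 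Item (b) as literally stated is vacuous under the sign-constancy hypothesis: if some $a_n>0$ and the terms do not change sign, then all $a_n\geq0$ and $\sum_{n\geq1}a_n\geq0$, contradicting $\sum_{n\geq1}a_n\leq-1$; this should have signalled that the statement must be read against Elaydi's original formulation rather than patched by reinterpretation. For (c) you explicitly leave open the quantitative winding-number step, and your own observation that mass concentrated on even indices kills any real root in $[-1,0]$ shows the real-axis IVT cannot close the case.
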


\noindent At this point, if we consider the sequence
\beqaa\label{Seq-Conf-10}
a_n=\frac{1}{n(n+1)},\quad n\geq 1\textrm{,}
\eeqaa
the null solution of (\ref{Eq-Recu-10}) is not asymptotically stable by the item
(b) of theorem \ref{Teo-Elaydi-10} or the item (a) of theorem 
\ref{Teo-Elaydi-20}. On the other hand, (\ref{Seq-Conf-10}) satisfies the 
conditions for asymptotic stability of the null solution of (\ref{Eq-Recu-10})
as given by the following theorem due to Erd\"os, Feller e Pollard:
\begin{theorem}[See \cite{Erdos}]\label{Teo-Erdos}
Let $(a_n)$ be a sequence of nonnegative terms such that
\beqae
\mathrm{gcd}\{n\in\N\,:\,a_n>0\}=1\textrm{,}\quad
\sum_{n=1}^{\infty}a_n=1\quad
\textrm{and}\quad
\sum_{n=1}^{\infty}na_n=\infty\textrm{.}
\eeqae
Then the null solution of (\ref{Eq-Recu-10}) is asymptotically stable.
\end{theorem}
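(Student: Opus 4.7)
My plan is to treat (\ref{Eq-Recu-10}) as a renewal equation. By linearity it suffices to analyze the solution with $x_0=1$ and to prove $x_n\to 0$. Because $a_n\ge 0$ and $\sum a_n=1$, an easy induction yields $0\le x_n\le 1$ for every $n$, which already gives Lyapunov stability and reduces the theorem to showing $L:=\limsup_n x_n=0$.

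The algebraic backbone I would use is the identity
\beqae
\sum_{k=0}^{n}x_k\,r_{n-k}=1,\qquad n\ge 0,\qquad r_m:=\sum_{j=m+1}^{\infty}a_j.
\eeqae
This follows from (\ref{Eq-Ser-Pot-10}): since $a(1)=1$, one has $(1-a(z))/(1-z)=\sum_{m\ge 0}r_m z^m=:r(z)$, so that $x(z)\,r(z)=1/(1-z)$. The role of the hypothesis $\sum n a_n=\infty$ is now transparent, since it rewrites as $\sum_{m\ge 0}r_m=\infty$.

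The core of the argument, and what I expect to be the delicate step, is a spreading of the $\limsup$ along the support of $a$. Pick $n_k\to\infty$ with $x_{n_k}\to L$. For each fixed $j_0$ with $a_{j_0}>0$, I would isolate that term in the recurrence and estimate the rest using $x_m\le L+\varepsilon$ for $m\ge N$ and $x_m\le 1$ on the (asymptotically negligible) tail $m<N$, obtaining
\beqae
x_{n_k}\le a_{j_0}\,x_{n_k-j_0}+(L+\varepsilon)(1-a_{j_0})+o(1).
\eeqae
Letting $k\to\infty$ and then $\varepsilon\downarrow 0$ forces $x_{n_k-j_0}\to L$. Iterating, $x_{n_k-m}\to L$ for every $m$ in the additive semigroup generated by $S:=\{j:a_j>0\}$. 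The assumption $\gcd S=1$, via the Sylvester--Frobenius theorem, guarantees that this semigroup contains every integer $m\ge K_0$ for some $K_0$.

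To conclude, suppose for contradiction that $L>0$. For any $M\in\N$ choose $k$ so large that $x_{n_k-m}\ge L/2$ simultaneously for all $m\in\{K_0,\ldots,K_0+M\}$ (a finite intersection of limits). The identity above then gives
\beqae
1=\sum_{m=0}^{n_k}x_{n_k-m}\,r_m\ \ge\ \frac{L}{2}\sum_{m=K_0}^{K_0+M}r_m,
\eeqae
and letting $M\to\infty$ with $\sum r_m=\infty$ yields a contradiction, so $L=0$. The two points that require most care are the uniformity in the spreading step (to ensure the propagation of the limit is along one and the same subsequence) and the use of aperiodicity to cover a full tail of integers.
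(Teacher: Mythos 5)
The paper does not actually prove Theorem \ref{Teo-Erdos}: it is imported verbatim from Erd\H{o}s--Feller--Pollard \cite{Erdos}, so there is no internal proof to compare your attempt against. Your argument is a correct reconstruction of that classical elementary proof: the inductive bound $0\le x_n\le 1$, the renewal identity $\sum_{k=0}^{n}x_k r_{n-k}=1$ coming from the factorization $1-a(z)=(1-z)r(z)$, the propagation of the $\limsup$ along the support of $(a_n)$ combined with $\gcd S=1$, and the divergence $\sum_m r_m=\sum_n n a_n=\infty$ are exactly the ingredients of the original argument. The only points left as sketches --- the $o(1)$ control of the initial segment via $r_{n_k-N}\to 0$, and the reduction of $\gcd S=1$ to a finite subset of $S$ before invoking Sylvester--Frobenius --- are routine to fill in, so I see no gap.
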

\noindent Therefore there exists a contradiction between the previous theorem and 
theorems \ref{Teo-Elaydi-10} and \ref{Teo-Elaydi-20}. An analysis of the proof 
of theorem \ref{Teo-Elaydi-10} makes clear that the analyticity of the power 
series $a(z)$ on the circumference $|z|=1$ was strongly used. But this fact is 
not a consequence of the assumption that $\left(a_n\right)\in\ell^1$, as we can
see in the example (\ref{Seq-Conf-10}). Hence a simple correction can be made by
introducing the radius of convergence of the series $a(z)$,
\beqaa\label{Raio-10}
\frac{1}{R}=\limsup_{n\con\infty}\sqrt[n]{|a_n|}\textrm{,}
\eeqaa
and replacing the hypothesis that $\left(a_n\right)\in\ell^1$ by $R>1$. In fact,
if $R>1$, then the function $a(z)$ is analytic in $|z|=1$ and $(a_n)\in\ell^1$,
which are conditions that assure us of the validity of theorem 
\ref{Teo-Elaydi-10}. Furthermore, by applying this new hypothesis, there is no 
contradiction between theorem \ref{Teo-Erdos} and theorems \ref{Teo-Elaydi-10} 
and \ref{Teo-Elaydi-20} since we may easily show that the conditions of theorem
\ref{Teo-Erdos} implies that $R=1$.\\ 
Therefore, if $R=1$, the validity of theorem \ref{Teo-Elaydi-10} is an open 
problem since the analyticity on the unit circumference can not be applied. 
Besides theorem \ref{Teo-Erdos}, some other results give us some sufficient 
conditions for the asymptotic stability of the null solution of 
(\ref{Eq-Recu-10}) and can be found in
\cite{Berenhaut,Elaydi-MV,Vecchio,Kolmanovskii-K,Choi,Tang,Nguyen,Nigmatulin}.\\
The rest of this paper is divided as follows: In section 2 we present an 
alternative proof of theorem \ref{Teo-Elaydi-10} with the corrected hypothesis,
that is, we suppose that $R>1$. In section 3 we analyze the validity of theorem
\ref{Teo-Elaydi-10} when we have $R= 1$. Furthermore, we study possible 
characterizations for the null solution of (\ref{Eq-Recu-10}) to be stable if 
$R<1$. In section 4 we analyze the stability via finite approximations.
\section{An Alternative Proof of Theorem \ref{Teo-Elaydi-10}}
\noindent Firstly, note that the stability of the null solution of 
(\ref{Eq-Recu-10}) depends on the behavior of the particular solution $(x_n)$ 
with initial condition $x_0=1$. In fact, an arbitrary solution to the equation 
(\ref{Eq-Recu-10}) with initial condition $\beta$ is given by $(x_n\beta)$. 
Thus, in what follows, $(x_n)$ denotes the solution of (\ref{Eq-Recu-10}) with 
initial condition $x_0=1$. Therefore, just for future reference, we have the
following elementary result, which is already known in a more general case (see e.g.\ theorem 3.1 in \cite{Crisci-Vecchio}):
\begin{theorem}\label{Teo-Higidio}
The null solution of (\ref{Eq-Recu-10}) is:
\begin{enumerate}
\item stable if, and only if, $(x_n)$ is bounded;
\item asymptotically stable if, and only if, $x_n\con 0$.
\end{enumerate}
\end{theorem}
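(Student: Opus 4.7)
The plan is to exploit the linearity observation already made in the paragraph preceding the theorem: every solution of (\ref{Eq-Recu-10}) with initial datum $\beta$ equals $\beta x_n$, where $(x_n)$ denotes the distinguished solution with $x_0=1$. This reduces both claims to properties of the single sequence $(x_n)$.

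For part 1, I would first handle the sufficient direction. Assuming $|x_n|\le M$ for some $M>0$ and all $n$, any solution with $|x_0|=|\beta|<\delta$ satisfies $|\beta x_n|<M\delta$; given $\varepsilon>0$, the choice $\delta=\varepsilon/M$ yields $|\beta x_n|<\varepsilon$, so the null solution is stable. For the converse, I would fix $\varepsilon=1$ and invoke stability to obtain $\delta>0$ such that $|\beta|<\delta$ implies $|\beta x_n|<1$ for all $n$. Taking, for example, $\beta=\delta/2$ then forces $|x_n|<2/\delta$ for every $n$, which is the required boundedness.

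For part 2, the sufficient direction is immediate: $x_n\con 0$ implies $(x_n)$ is bounded, so the null solution is stable by part 1, and $\beta x_n\con 0$ for every $\beta\in\R$, giving the attractivity needed for asymptotic stability. For the necessary direction, asymptotic stability provides some $\delta_0>0$ such that $|\beta|<\delta_0$ implies $\beta x_n\con 0$; picking $\beta=\delta_0/2\neq 0$ and dividing by $\beta$ yields $x_n\con 0$.

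There is no real obstacle: the entire argument is a careful restatement of linearity. The only point deserving explicit mention is the precise definitions of stability and asymptotic stability for the null solution that I would adopt, namely the standard $\varepsilon$--$\delta$ formulation together with the local attracting property, matching the notions implicit in the references to Elaydi cited earlier in the paper.
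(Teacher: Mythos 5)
Your proposal is correct and follows exactly the route the paper intends: the paper states this theorem without proof, relying on the linearity observation in the preceding paragraph (every solution with initial datum $\beta$ is $(\beta x_n)$) and a citation to a more general result, and your $\varepsilon$--$\delta$ argument is precisely the standard elementary verification being left implicit.
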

\noindent Now, theorem \ref{Teo-Elaydi-10} was proved by Elaydi in 
\cite{Elaydi-10} using strongly $Z$-transform techniques and the analyticity of
the function $a(z)$ on the unit circumference. Therefore, if $R>1$, the 
arguments presented there, along with the replacement of $1/z$ by $z$, give us 
a proof of this theorem. However, due to the importance of this result and for 
the sake of completeness, we will prove theorem \ref{Teo-Elaydi-10} using 
alternative arguments which are different from the ones used by Elaydi.
\begin{numlesstheorem}
Replace $\left(a_n\right)\in\ell^1$ by $R>1$. Then the items (a) and (b) of 
theorem \ref{Teo-Elaydi-10} are valid.
\end{numlesstheorem}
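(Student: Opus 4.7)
By Theorem~\ref{Teo-Higidio} it suffices to decide when the particular solution $(x_n)$ with $x_0=1$ is bounded (for (a)) and when $x_n\to 0$ (for (b)). The engine of the proof is the identity $x(z)=1/(1-a(z))$, valid in a neighborhood of $0$. Since $R>1$, the denominator is analytic on $B_R(0)$, so its zero set inside any compact subset is finite; let $z_1,\dots,z_k$ be the zeros in $\overline{B_1(0)}$, of orders $m_1,\dots,m_k$.

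\emph{Sufficiency.} If $1-a(z)$ has no zeros in $\overline{B_1(0)}$, then $x(z)$ extends analytically to some $B_{1+\delta}(0)$, and Cauchy's coefficient estimate forces $|x_n|\le C(1+\delta/2)^{-n}$, which tends to $0$; this gives the ``if'' direction of (b). If the only zeros in $\overline{B_1(0)}$ sit on $|z|=1$ and are simple, I would subtract principal parts to obtain
\[
x(z)=\sum_{j=1}^{k}\frac{A_j}{z-z_j}+h(z),
\]
with $h$ analytic on some $B_{1+\delta}(0)$. Expanding $1/(z-z_j)=-\sum_{n\ge 0} z^n/z_j^{n+1}$ yields $x_n=-\sum_{j}A_j z_j^{-(n+1)}+h_n$, a bounded (almost-periodic) sequence plus a geometrically decaying one, proving the ``if'' direction of (a).

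\emph{Necessity.} If $1-a(z)$ has a zero $z_0$ with $|z_0|<1$, then the radius of convergence of $x(z)$ is at most $|z_0|$, so $\limsup|x_n|^{1/n}\ge 1/|z_0|>1$ and $(x_n)$ is unbounded; this handles both (a) and (b) in that case. Otherwise all zeros in $\overline{B_1(0)}$ lie on $|z|=1$, and the same principal-part subtraction (now allowing higher-order poles) gives
\[
x_n=\sum_{j=1}^{k}P_j(n)\,z_j^{-n}+O(\rho^n),\qquad \rho<1,
\]
where $P_j$ is a polynomial of degree $m_j-1$ with nonzero leading coefficient $\alpha_j$. The remaining content reduces to: if some $m_j\ge 2$, then $(x_n)$ is unbounded; and if $k\ge 1$, then $x_n\not\to 0$.

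\emph{Main obstacle.} Both implications reduce to a non-cancellation statement for exponential polynomials on the unit circle, which I see as the technical heart of the argument. The approach I would use is a Parseval/orthogonality computation: with $m=\max_j m_j$,
\[
\frac{1}{N}\sum_{n=1}^{N}\left|\frac{x_n}{n^{m-1}}\right|^{2}\longrightarrow \sum_{j:\,m_j=m}|\alpha_j|^{2}>0\qquad(N\to\infty),
\]
the cross terms vanishing because $(z_k/z_j)^n$ with distinct $z_j,z_k$ on the unit circle has Ces\`aro average $0$. This forces $|x_n|/n^{m-1}$ to be bounded below along a subsequence, which gives unboundedness when $m\ge 2$ (necessity in (a)) and prevents $x_n\to 0$ when $m=1$ (necessity in (b)). All remaining ingredients are routine manipulations of power series and Cauchy estimates on disks slightly larger than $\overline{B_1(0)}$.
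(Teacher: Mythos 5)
Your proposal is correct and follows essentially the same route as the paper: both continue $x(z)=1/(1-a(z))$ meromorphically past the unit circle (possible since $R>1$), split off the contribution of the poles on $|z|=1$ (you by subtracting principal parts, the paper by computing residues of $x(z)/z^{n+1}$ inside $|z|=\rho$ for some $\rho\in]1,R[$), control the remainder by a Cauchy estimate on $|z|=\rho$, and handle a zero inside $B_1(0)$ via the radius of convergence of $x(z)$. The only substantive difference is the non-cancellation step: the paper merely asserts that $\sum_k\alpha_k e^{-i\theta_k n}$ is nonzero for infinitely many $n$ (and does not tend to $0$), whereas your Ces\`aro/Parseval computation actually proves this, so your write-up is, if anything, more complete at the one point the paper leaves to the reader.
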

\begin{proof}
We show both items, (a) and (b), simultaneously.\\
{\bf Sufficient Condition:} Suppose that the equation $1-a(z)=0$ has no roots in
$B_1(0)$. Consider $\rho\in]1,R[$ such that the function $1-a(z)$ is not zero in
$1<|z|\leq\rho$, denoting by $z_1,\ldots,z_s$ its possible zeros in $|z|=1$. 
For each $n\in\N$, define the function $h_n(z)=x(z)/z^{n+1}$. Hence
\beqae
\underset{z=0}{\text{Res}}\;h_n(z)=\frac{x^{(n)}(0)}{n!}=x_n\textrm{.}
\eeqae
By this identity and the Residue Theorem, it follows that
\beqaa\label{Eq-Res-rn-10}
x_n
=
\frac{1}{2\pi i}\int_{|z|=\rho} h_n(z)\;dz
-
\sum_{k=1}^s\underset{z=z_k}{\text{Res}}\;h_n(z)
\textrm{,}
\eeqaa
whose integral can be estimated by
\beqaa\label{Eq-Res-rn-30}
\left|\int_{|z|=\rho}h_n(z)\;dz\right|
=
\left|
\int_{0}^{2\pi}\frac{ix(\rho e^{i\theta})}{\rho^ne^{in\theta}}\;d\theta
\right|
\leq 
\frac{2\pi}{\rho^n}\sup_{|z|=\rho}|x(z)|\con 0
\eeqaa
when $n\con\infty$. Then, if $1-a(z)$ is not zero in $|z|=1$, we have $x_n\con0$
as $n\con\infty$. Therefore the null solution of (\ref{Eq-Recu-10}) is 
asymptotically stable. On the other hand, if the zeros of $1-a(z)$ in $|z|=1$ 
are of order one, we have that, for each $1\leq k\leq s$, $x(z)=q_k(z)/(z-z_k)$
where $q_k(z)$ is an analytic function at the point $z_k$ with 
$q_k(z_k)\ne0$. Hence
\beqae
\underset{z=z_k}{\text{Res}}\;h_n(z)
=
\frac{q_k(z_k)}{z_k^{n+1}}
\quad\Rightarrow\quad
\left|\underset{z=z_k}{\text{Res}}\;h_n(z)\right|=|q_k(z_k)|
\textrm{.}
\eeqae
From this equation and formulas (\ref{Eq-Res-rn-10}) and (\ref{Eq-Res-rn-30}), 
it follows that $(x_n)$ is bounded. Therefore the null solution of 
(\ref{Eq-Recu-10}) is stable.\\
{\bf Necessary Condition:} Suppose now that the null solution of 
(\ref{Eq-Recu-10}) is stable. In theorem \ref{Teo-NoEstavel-10} we prove that 
the function $1-a(z)$ is not zero in $B_1(0)$. So it remains to show that the 
possible zeros of $1-a(z)$ in $|z|=1$ are of order $1$. In fact, if some $z_k$
is a zero of $1-a(z)$ of order $1+m_k\geq2$, then $x(z)=q_k(z)/(z-z_k)^{1+m_k}$
where $q_k(z)$ is analytic at $z_k$ with $q_k(z_k)\ne0$. Hence
\beqae
\underset{z=z_k}{\text{Res}}\;h_n(z)
=
\frac{1}{m_k!}\frac{d^{m_k}}{dz^{m_k}}\sbrack{\frac{q_k(z)}{z^{n+1}}}_{z=z_k}
=
n^{m_k}\alpha_k e^{-i\theta_k n}+p_k(n)
\textrm{,}
\eeqae
where $z_k=e^{i\theta_k}$ with $\theta_k\in [0,2\pi[$, $\alpha_k\ne 0$ and $p_k$
is a polynomial of degree less than $m_k$. Reordering the zeros of $1-a(z)$ 
such that $z_1,\ldots,z_\mu$, $\mu\leq s$, are all the highest order zeros,
$1+m$, we have that
\beqae
\ds\sum_{k=1}^{\mu}\underset{z=z_k}{\text{Res}}\;h_n(z)
=
n^m\left(\sum_{k=1}^{\mu}\alpha_ke^{-i\theta_kn}\right)
+
p(n)
\textrm{,}
\eeqae
where $p$ is a polynomial of degree less than $m$. Since
\beqae
\sum_{k=1}^{\mu}\alpha_ke^{-i\theta_kn}\neq0
\quad
\textrm{for infinitely many indices}
\quad
n
\textrm{,}
\eeqae
it follows that $\ds\sum_{k=1}^{\mu}\underset{z=z_k}{\text{Res}}\;h_n(z)$ is 
unbounded. Then, since 
$\ds\sum_{k=\mu+1}^{s}\underset{z=z_k}{\text{Res}}\;h_n(z)$ has powers of $n$ 
less than $m$, it follows from (\ref{Eq-Res-rn-10}) and (\ref{Eq-Res-rn-30}) 
that $(x_n)$ is unbounded, which is absurd. If we now consider that the null 
solution of (\ref{Eq-Recu-10}) is asymptotically stable, then it is stable and 
therefore $1-a(z)$ has no zeros of order greater than one in $|z|=1$. Thus, 
since
\beqae
\sum_{k=1}^{s}\underset{z=z_k}{\text{Res}}\;h_n(z)
=
\sum_{k=1}^{s}\alpha_k e^{-i\theta_k n}\not\con 0
\textrm{,}
\eeqae
from (\ref{Eq-Res-rn-10}) and (\ref{Eq-Res-rn-30}) we have that $1-a(z)$ cannot
have zeros in $|z|=1$.
\end{proof}
\section{Results on Stability when $R=1$ or $R<1$}
\noindent In this section, we analyze the validity of theorem 
\ref{Teo-Elaydi-10} in the case where $R=1$ and provide some results on 
stability/instability of the null solution of (\ref{Eq-Recu-10}) when $R<1$.
It is worth pointing out that the arguments to be used are still valid in the 
case where $R>1$. Initially we give two examples on stability/instability 
for the case with $R<1$.\\
\example Let $p>1$. At first consider the sequence $a_n=-p^n$. Then $R<1$ and 
the solution of (\ref{Eq-Recu-10}) with initial condition $x_0=1$ is
$(x_n)=(1,-p,0,0,\ldots)$, which converges to zero. So the null solution is 
asymptotically stable. On the other hand, if we consider
\beqaa\label{Exem-10}
a_n=\frac{p^{n-1}}{n(n+1)}\textrm{,}\quad n\geq1\textrm{,}
\eeqaa
we also have $R<1$. However, since each term in this sequence is positive, 
it follows (by induction) that each term of $(x_n)$ is positive. Hence 
$x_n\geq a_n$ for each $n\geq 1$. Thus $(x_n)$ is unbounded and consequently 
the null solution of (\ref{Eq-Recu-10}) is unstable by theorem 
\ref{Teo-Higidio}.\\ 
The previous example shows that the stability or instability of the null 
solution does not depend on the radius of convergence of $a(z)$. In what follows
we present a result on instability which does not depend on the size of $R$.
(In particular, it holds if $R=1$ or $R<1$, which is the case where theorem $\textrm{2}'$ is not applicable.) 
\begin{theorem}\label{Teo-NoEstavel-10}
Let
$\ds D_a=\left\{z\in\C\,:\,\sum_{n=1}^{\infty}a_nz^n\text{ converges}\right\}$. 
If the characteristic equation (\ref{Caract-10}) has a root in $D_a\cap B_1(0)$,
then the null solution of (\ref{Eq-Recu-10}) is unstable.
\end{theorem}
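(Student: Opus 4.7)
The plan is to argue by contradiction using Theorem~\ref{Teo-Higidio}: assume the null solution is stable, so the solution $(x_n)$ with $x_0=1$ is bounded, say $|x_n|\leq M$ for all $n$, and derive a contradiction by evaluating the formal identity $x(z)(1-a(z))=1$ at the alleged root $z_0 \in D_a\cap B_1(0)$.

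First I would use the bound $|x_n|\leq M$ together with $|z_0|<1$ to conclude that the series $x(z_0)=\sum_{n=0}^{\infty}x_n z_0^n$ converges absolutely, as it is dominated by the geometric series $\sum M |z_0|^n$. Meanwhile, $a(z_0)=\sum_{n=1}^{\infty}a_n z_0^n$ converges by the hypothesis $z_0\in D_a$ (though possibly only conditionally, since $z_0$ could lie on the boundary of the disk of convergence of $a$). This is the setup where Mertens' theorem on Cauchy products applies: when one of two convergent series converges absolutely, their Cauchy product converges to the product of their sums.

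Next I would compute the Cauchy product. For $n\geq 1$ the $n$-th coefficient of the product is $\sum_{k=0}^{n-1}a_{n-k}x_k$, which is exactly $x_n$ by the recurrence~(\ref{Eq-Recu-10}); for $n=0$ it vanishes. Thus
\[
x(z_0)\,a(z_0)=\sum_{n=1}^{\infty}x_n z_0^n = x(z_0)-1,
\]
which rearranges to $x(z_0)\bigl(1-a(z_0)\bigr)=1$. Since $z_0$ is a root of the characteristic equation, $1-a(z_0)=0$, and we obtain $0=1$, a contradiction. Hence $(x_n)$ cannot be bounded, and by Theorem~\ref{Teo-Higidio} the null solution is unstable.

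The delicate point, and the place where the hypothesis $z_0\in D_a$ is used in an essential way, is the justification of the identity $x(z_0)\bigl(1-a(z_0)\bigr)=1$ when $z_0$ may lie on the boundary of convergence of $a(z)$. On the open disk $|z|<|z_0|$, both series converge absolutely (since convergence at $z_0$ implies absolute convergence on any strictly smaller disk), so the identity is routine there; the obstacle is to push the identity to $z_0$ itself, where absolute convergence of $a$ may fail. Mertens' theorem resolves exactly this obstruction, using the absolute convergence of $x(z_0)$ that the boundedness assumption provides.
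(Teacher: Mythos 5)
Your proof is correct, but it takes a genuinely different route from the paper's. The paper locates a zero $\rho_0e^{i\theta}$ of $1-a(z)$ of smallest modulus $\rho_0<1$, identifies $x(z)$ with the analytic function $(1-a(z))^{-1}$ on $B_{\rho_0}(0)$, and lets $|x(\rho e^{i\theta})|\to\infty$ as $\rho\to\rho_0^-$ to conclude that the radius of convergence of $x(z)$ is at most $\rho_0$; hence $\limsup\sqrt[n]{|x_n|}\geq 1/\rho_0>1$ and $(x_n)$ is unbounded. Your argument instead evaluates the identity $x(z_0)(1-a(z_0))=1$ directly at the given root $z_0$, justified by Mertens' theorem: the boundedness of $(x_n)$ (from the stability assumption via theorem \ref{Teo-Higidio}) gives absolute convergence of $x(z_0)$, the hypothesis $z_0\in D_a$ gives (possibly conditional) convergence of $a(z_0)$, and the Cauchy product reproduces the recurrence, yielding the contradiction $0=1$. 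Your version is more elementary and sidesteps two points the paper leaves implicit (the existence of a smallest-modulus zero, and the radial limit $a(\rho e^{i\theta})\to a(z_0)$ when $z_0$ lies on the boundary of the disk of convergence of $a$, which requires Abel's theorem); on the other hand, the paper's argument delivers strictly more information, namely that $|x_{n_k}|>\beta^{n_k}$ along a subsequence for any $1<\beta<1/\rho_0$, i.e.\ geometric growth rather than mere unboundedness.
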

\begin{proof}
Denote by $\rho_0 e^{i\theta}$, $\rho_0\in]0,1[$, one of the zeros of $1-a(z)$ 
of smallest modulus. Hence $1-a(z)\ne 0$ for each $z\in B_{\rho_0}(0)$. Since 
$x(z)=(1-a(z))^{-1}$ for every $z\in B_{\rho_0}(0)$, we have that
\beqae
\lim_{\rho\con \rho_0^-}|x(\rho e^{i\theta})|=\infty\textrm{.}
\eeqae
Therefore the radius of convergence of $x(z)$ is not greater than $\rho_0$. Then
\beqae
\limsup\sqrt[n]{|x_n|}\geq\frac{1}{\rho_0}>1\textrm{.}
\eeqae
Thus, if $\frac{1}{\rho_0}>\beta>1$, there exists a subsequence $(x_{n_k})$ for 
which $\sqrt[n_k]{|x_{n_k}|}>\beta$. So we conclude that 
$|x_{n_k}|>\beta^{n_k}$. Therefore the sequence $(x_n)$ is not bounded.
\end{proof}
\noindent{\bf Remark:} The converse of theorem \ref{Teo-NoEstavel-10} is not 
valid. In fact, for the sequence given in (\ref{Exem-10}), the null solution of
(\ref{Eq-Recu-10}) is unstable. On the other hand, since 
$D_a=\overline{B_{1/p}(0)}$ and
\beqae
|a(z)|=\left|\sum_{n=1}^{\infty}\frac{p^{n-1}}{n(n+1)}z^n\right|\leq 
\frac{1}{p}<1\textrm{,}\quad\forall z\in \overline{B_{1/p}(0)}\textrm{,}
\eeqae
it follows that $1-a(z)$ is not zero in $D_a\cup B_1(0)$.
\begin{corollary}\label{MaiorUm-10}
If $\ds\sum_{n=1}^{\infty}a_n>1$ converges, then the null solution of 
(\ref{Eq-Recu-10}) is unstable.
\end{corollary}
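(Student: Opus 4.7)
The plan is to apply Theorem \ref{Teo-NoEstavel-10}: it suffices to produce a root of the characteristic equation $1-a(z)=0$ lying in $D_a \cap B_1(0)$. Since the hypothesis provides a real quantity $\sum_{n=1}^{\infty} a_n$ strictly larger than $1$, a natural strategy is to search for a zero of $1-a(z)$ on the positive real axis between $0$ and $1$ via the intermediate value theorem.

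First, I would verify that the restriction of $a$ to $[0,1]$ is a well-defined continuous real-valued function. The convergence of $\sum a_n$ forces the radius of convergence of the power series $\sum a_n z^n$ to be at least $1$, so $B_1(0) \subseteq D_a$ and the series $a(t) = \sum a_n t^n$ converges for every $t \in [0,1)$; at $t = 1$ it converges by hypothesis, so $1 \in D_a$ as well. Continuity on $[0,1)$ is immediate from the analyticity of $a$ inside the open disk, while continuity at the endpoint $t=1$ follows from Abel's limit theorem, which yields $\lim_{t \to 1^-} a(t) = \sum_{n=1}^{\infty} a_n$.

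Next, I would invoke the intermediate value theorem. Since $a(0) = 0 < 1 < a(1)$, there exists $t_0 \in (0,1)$ with $a(t_0) = 1$, i.e., $1 - a(t_0) = 0$. As $|t_0| < 1$ and $t_0 \in D_a$, we have $t_0 \in D_a \cap B_1(0)$, and Theorem \ref{Teo-NoEstavel-10} then gives the instability of the null solution.

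The argument presents no real obstacle: the only step using a fact beyond elementary calculus is Abel's limit theorem, invoked to guarantee continuity of the partial sums at the boundary point $t=1$, which is needed because the hypothesis does not assume $(a_n) \in \ell^1$ and so one cannot rely on uniform convergence in $\overline{B_1(0)}$.
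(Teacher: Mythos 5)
Your proposal is correct and follows essentially the same route as the paper: Abel's theorem gives continuity of $a$ on $[0,1]$ with $\lim_{t\to 1^-}a(t)=\sum a_n>1$, the intermediate value theorem produces a zero of $1-a(z)$ in $]0,1[\subset D_a\cap B_1(0)$, and Theorem \ref{Teo-NoEstavel-10} concludes instability. Your additional remarks on why the radius of convergence is at least $1$ and why $1\in D_a$ are a slightly more careful writing of the same argument.
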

\begin{proof}
Since $\ds\sum_{n=1}^{\infty}a_n$ converges, it follows from Abel's theorem that
the power series $a(z)$ is continuous in $[0,1]$. Specifically, one has that
$\lim\limits_{z\to 1^{-}}a(z)=\sum\limits_{n=0}^{\infty}a_n$. So consider 
the function $b(z)=1-a(z)$. Then $b(0)=1$ and $b(1)<0$. Therefore $b(z)$ has a
zero in the interval $]0,1[\subset B_1(0)$. It follows from the previous theorem
that the null solution of (\ref{Eq-Recu-10}) is not stable.
\end{proof}
\noindent{\bf Remark:} By item (a) of theorem \ref{Teo-Elaydi-20}, we have that 
the lack of asymptotic stability takes place when, in particular, the hypothesis
of the previous corollary holds, provided that the terms of the sequence $(a_n)$
do not change signs. So, the previous corollary states the lack of stability 
(and therefore the lack of asymptotic stability) without any sign-preserving 
condition. Furthermore, that corollary remains valid if we replace the 
hypothesis $\sum\limits_{n=1}^{\infty}a_n>1$ by
$\sum\limits_{n=1}^{\infty}(-1)^na_n<1$.\\ \\
The following theorem shows that part of what was stated in the item (b) of
theorem $\textrm{2}'$ remains valid if $R=1$. (Once more, as seen in example 
\ref{Seq-Conf-10}, we emphasize that the item (b) of theorem \ref{Teo-Elaydi-10}
is not valid in the case where $R=1$. Furthermore, the argument which was used 
to prove that item in that case is not applicable since it was based on the 
analyticity of the function $a$ on the unit circumference. Anyway, that result
remains partially valid if $R=1$. We prove the sufficient condition of it  without using the analyticity argument.)
\begin{theorem}\label{Teo-Assint-10}
If $1-a(z)$ is continuous and not zero in $\overline{B_1(0)}$, then the null
solution of (\ref{Eq-Recu-10}) is asymptotically stable.
\end{theorem}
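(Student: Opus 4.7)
The plan is to prove that the particular solution $(x_n)$ with $x_0=1$ tends to zero, and then invoke Theorem~\ref{Teo-Higidio}. The strategy is to identify each $x_n$ as the $n$-th Fourier coefficient of a continuous function on the unit circle and then apply the Riemann-Lebesgue lemma.

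Setting $f(z):=(1-a(z))^{-1}$, the hypothesis ensures that $f$ is analytic on $B_1(0)$ and continuous on $\overline{B_1(0)}$. The recurrence (\ref{Eq-Recu-10}) yields the formal identity $x(z)(1-a(z))=1$, and uniqueness of power series then shows that $\sum_{n\geq 0} x_n z^n$ is the Taylor expansion of $f$ at $0$, with radius of convergence at least $1$.

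For $0<r<1$, the Cauchy integral formula applied on $|z|=r$ gives
\beqae
x_n \, r^n \;=\; \frac{1}{2\pi}\int_0^{2\pi} f(re^{i\theta})\, e^{-in\theta}\, d\theta.
\eeqae
Since $f$ is continuous on the compact set $\overline{B_1(0)}$, it is uniformly continuous there, so $f(re^{i\theta}) \to f(e^{i\theta})$ uniformly in $\theta$ as $r\to 1^-$. Passing to the limit yields
\beqae
x_n \;=\; \frac{1}{2\pi}\int_0^{2\pi} f(e^{i\theta})\, e^{-in\theta}\, d\theta,
\eeqae
which identifies $x_n$ as the $n$-th Fourier coefficient of the continuous (hence $L^1$) function $\theta\mapsto f(e^{i\theta})$. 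The Riemann-Lebesgue lemma then yields $x_n\to 0$, and Theorem~\ref{Teo-Higidio} concludes asymptotic stability.

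The main obstacle is the step $r\to 1^-$ identifying Taylor with Fourier coefficients. Under the stronger hypothesis $R>1$ used in Theorem~$\textrm{2}'$, one can deform the contour out to $|z|=\rho>1$ and extract geometric decay immediately from the resulting bound; here, with only continuity up to the boundary, no such deformation is available, and one must rely on uniform continuity on the compact closed disk to exchange limit and integral. After that, the Riemann-Lebesgue conclusion is standard, but, tellingly, gives no quantitative rate of decay, which is consistent with the slow decay exhibited by example (\ref{Seq-Conf-10}).
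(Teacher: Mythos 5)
Your proof is correct and follows essentially the same route as the paper's: Cauchy's integral formula on $|z|=r$, uniform continuity of $(1-a(z))^{-1}$ on the closed disk to pass to the limit $r\to1^-$, identification of $x_n$ as a Fourier coefficient of a continuous boundary function, and the Riemann--Lebesgue lemma. No substantive differences to report.
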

\begin{proof}
First we observe that the function $x(z)=(1-a(z))^{-1}$ is uniformly continuous
on $\overline{B_{1}(0)}$. Now, for each $\rho\in [0,1[$ and $n\in\N$, define the
following two functions on the interval $[0,2\pi]$:
\beqae
f_{\rho}(\theta)=x(\rho e^{i\theta})e^{-in\theta}\textrm{,}\quad 
f(\theta)=x(e^{i\theta})e^{-in\theta}\textrm{.}
\eeqae
It follows from the uniform continuity of $x(z)$ on $\overline{B_1(0)}$ that 
$f_{\rho}\con f$ uniformly on $[0,2\pi]$ as $\rho\con1^{-}$. Therefore
\beqae
\lim_{\rho\con 1^{-}}\int_0^{2\pi}x(\rho e^{i\theta})e^{-in\theta}\;d\theta
=
\int_0^{2\pi} x(e^{i\theta})e^{-in\theta}\;d\theta\textrm{.}
\eeqae
From the Cauchy's Integral Formula, we have that, for every $\rho\in]0,1[$,
\beqae
x_n
=
\frac{1}{2\pi i}\int_{|z|=\rho}\frac{x(z)}{z^{n+1}}\;dz
=
\frac{1}{2\pi \rho^n}\int_0^{2\pi} x(\rho e^{i\theta})e^{-in\theta}\;d\theta
\textrm{.}
\eeqae
Applying the limit when $\rho\con1^{-}$, it follows that
\beqae
x_n=\frac{1}{2\pi}\int_0^{2\pi}x(e^{i\theta})e^{-in\theta}\;d\theta\textrm{.}
\eeqae
So, by the Riemann-Lebesgue Lemma, one has $x_n\con 0$, which shows that the 
null solution of (\ref{Eq-Recu-10}) is asymptotically stable.
\end{proof}
\noindent{\bf Remark:} The conclusion of the preceding theorem is not valid if
the radius of convergence of $a(z)$, $R$, is less than one. In other words, 
even if $1-a(z)$ is continuous and not zero in $\overline{B_R(0)}$, the null
solution may not be asymptotically stable. To illustrate this statement, it 
suffices to consider the sequence given in (\ref{Exem-10}). Additionally, note 
that, if $R=1$, the converse of the preceding theorem is not valid, as shown in
example (\ref{Seq-Conf-10}).\\
To finalize this section, we enunciate an auxiliary lemma for the 
characterization of the stability (not necessarily an asymptotic one) of the 
null solution when $R=1$.
\begin{lemma}
Consider the following  power series
\beqae
y(z)=\sum_{n=0}^{\infty}y_n z^n,\quad p(z)=\sum_{n=0}^{\infty}p_n z^n\textrm{.}
\eeqae
Suppose that $(y_n)$ is bounded. Then:
\begin{enumerate}
\item If $p(z)=(1-e^{-i\theta}z)y(z)$, then $(p_n)$ is bounded.
\item If $p(z)=(1-z)y(z)$, then $p(z)$ is bounded on the interval $[0,1[$.
\end{enumerate}
\end{lemma}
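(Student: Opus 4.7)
The plan is to prove each item by a direct series manipulation, since no deep complex analysis is needed once one writes out the coefficients.

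For item 1, I would expand
\[
p(z) = (1 - e^{-i\theta} z)\, y(z) = \sum_{n=0}^{\infty} y_n z^n - e^{-i\theta}\sum_{n=0}^{\infty} y_n z^{n+1},
\]
identify coefficients to get $p_0 = y_0$ and $p_n = y_n - e^{-i\theta} y_{n-1}$ for $n \geq 1$, and conclude
\[
|p_n| \leq |y_n| + |y_{n-1}| \leq 2 \sup_{k} |y_k|,
\]
which gives the boundedness of $(p_n)$. This is purely formal and the only thing to check is that the rearrangement is legitimate, which it is since both series converge absolutely for $|z|<1$ (as $(y_n)$ is bounded).

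For item 2, I would resist the temptation to simply invoke item 1 with $\theta = 0$: that only gives bounded coefficients for $p(z)$, which is not the same as boundedness of the function $p$ on $[0,1[$ (since the radius of convergence of $p$ is still only $1$). Instead, I would estimate the function directly. Let $M = \sup_n |y_n|$. For any $z \in [0,1[$,
\[
|p(z)| = (1-z)\,|y(z)| \leq (1-z) \sum_{n=0}^{\infty} |y_n|\, z^n \leq M(1-z)\sum_{n=0}^{\infty} z^n = M,
\]
which gives exactly the required bound. Note that the hypothesis $z \in [0,1[$ is used to ensure $1-z \geq 0$ and that all the terms $z^n$ are nonnegative, so the geometric series cancels cleanly with $(1-z)$.

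There is essentially no major obstacle: both items reduce to one-line estimates. The only place to be careful is in item 2, where one must argue about the size of $|p(z)|$ rather than about coefficients, and use that $z$ is real and nonnegative so the trivial bound $\sum |y_n| z^n \leq M/(1-z)$ is available.
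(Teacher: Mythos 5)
Your proposal is correct and follows essentially the same route as the paper: item 1 via the coefficient identity $p_n = y_n - e^{-i\theta}y_{n-1}$ with the bound $|p_n|\le 2\sup_k|y_k|$, and item 2 via the direct estimate $|p(z)|\le C(1-z)\sum_{n\ge 0}z^n = C$ for $z\in[0,1[$. Your added remark that item 2 cannot be deduced from item 1 (bounded coefficients do not give a bounded function on $[0,1[$) is a sensible clarification that the paper leaves implicit.
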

\begin{proof}
First note that $p_n=y_n-e^{-i\theta}y_{n-1}$ for each $n\geq1$. Hence, 
if $C=\sup_{n\geq 0}|y_n|$, then $|p_n|\leq 2C$ for every $n\geq1$, which  
demonstrates the item 1. Consider now that the hypothesis of the item 2 is 
valid. It follows that $z\in [0,1[$ implies
\beqae
|p(z)|\leq (1-z)\sum_{n=0}^{\infty}|y_n|z^n\leq C(1-z)\sum_{n=0}^{\infty}z^n=C
\textrm{.}
\eeqae
\end{proof}
\noindent In what follows, suppose that the power series $a(z)$ converges on 
$\overline{B_1(0)}$ and the possible zeros of $1-a(z)$ occur at 
$e^{i\theta_1},\ldots,e^{i\theta_s}$, in other words,
\beqaa\label{Pos-zero-10}
1-a(z)=(1-e^{-i\theta_1}z)^{m_1}\cdots(1-e^{-i\theta_s}z)^{m_s}q(z)\textrm{,}
\eeqaa
where $q(z)$ is not zero in $\overline{B_1(0)}$. Furthermore, consider the space
\beqae
\mathbb{L}^1:=\key{q(z)=\sum_{n=0}^{\infty}q_nz^n\,:\,(q_n)\in\ell^1}\textrm{.}
\eeqae
Furthermore, again, we make clear that, the argument which was used to prove
the item (a) of theorem $\textrm{2}'$ is valid only if $R>1$ since it 
was based on the analyticity of the function $a$ on the unit circumference. 
Here, in the case where $R=1$, we demonstrate that result for a particular 
situation without using the analyticity argument.\\
In these conditions we have the following result: 
\begin{theorem}
Assume that the power series $a(z)$ converges on 
$\overline{B_1(0)}$ and $q\in \mathbb{L}^1$. The null solution of (\ref{Eq-Recu-10}) is stable 
if, and only if, the possible zeros of $1-a(z)$ as given in (\ref{Pos-zero-10}) 
are of order 1.
\end{theorem}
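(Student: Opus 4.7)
The plan is to work directly with the factorization
\[
x(z)=\frac{1}{1-a(z)}=\frac{1}{q(z)\prod_{k=1}^{s}(1-e^{-i\theta_k}z)^{m_k}}
\]
and to read off boundedness (or unboundedness) of $(x_n)$ by convolution arguments in $\ell^{1}$ and $\ell^{\infty}$. The essential external ingredient is the Wiener--L\'evy inversion theorem in the analytic Wiener algebra $\mathbb{L}^{1}$: since $q\in\mathbb{L}^{1}$ and $q$ has no zeros on $\overline{B_{1}(0)}$, its reciprocal $1/q$ also lies in $\mathbb{L}^{1}$. For the sufficient direction (all $m_k=1$), using that the $e^{i\theta_k}$ are pairwise distinct I would expand
\[
\frac{1}{\prod_{k=1}^{s}(1-e^{-i\theta_k}z)}=\sum_{k=1}^{s}\frac{c_k}{1-e^{-i\theta_k}z},
\]
so that $x(z)=\sum_{k}c_k\,u(z)/(1-e^{-i\theta_k}z)$ with $u:=1/q\in\mathbb{L}^{1}$. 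A direct Cauchy--product computation yields, for the $n$-th Taylor coefficient of $u(z)/(1-e^{-i\theta_k}z)$, the expression $e^{-i\theta_k n}\sum_{m=0}^{n}u_m e^{i\theta_k m}$, whose modulus is bounded by $\|u\|_{\mathbb{L}^{1}}$. Summing over $k$ gives $(x_n)\in\ell^{\infty}$, and Theorem~\ref{Teo-Higidio} yields stability.

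For the necessary direction I would assume $(x_n)\in\ell^{\infty}$ and set $y(z):=x(z)q(z)$. Since $(q_n)\in\ell^{1}$, the convolution $y_n=\sum_{j=0}^{n}x_j q_{n-j}$ is uniformly bounded by $\sup_j|x_j|\cdot\|q\|_{\mathbb{L}^{1}}$. On the other hand, the factorization forces
\[
y(z)=\frac{1}{\prod_{k=1}^{s}(1-e^{-i\theta_k}z)^{m_k}},
\]
for which a standard partial fraction decomposition produces $y(z)=\sum_{k,\ell}c_{k,\ell}/(1-e^{-i\theta_k}z)^{\ell}$ with $1\le\ell\le m_k$ and $c_{k,m_k}\neq 0$. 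Applying the identity $[z^{n}](1-wz)^{-\ell}=\binom{n+\ell-1}{\ell-1}w^{n}$, the leading asymptotics in $n$ read
\[
y_n=\frac{n^{m-1}}{(m-1)!}\,P(n)+O(n^{m-2}),\qquad P(n):=\sum_{k:\,m_k=m}c_{k,m}e^{-i\theta_k n},\quad m:=\max_k m_k.
\]
A time-averaging argument, based on orthogonality of distinct characters, shows that $\frac{1}{N}\sum_{n<N}|P(n)|^{2}\to\sum_{k:\,m_k=m}|c_{k,m}|^{2}>0$, so $|P(n)|$ does not tend to $0$ and is bounded below by a positive constant along a subsequence. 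If $m\ge 2$, this forces $|y_n|$ to grow at least like $n^{m-1}$ on that subsequence, contradicting the boundedness of $(y_n)$ established above; hence every $m_k=1$.

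The only genuinely non-elementary ingredient is the Wiener inversion step $1/q\in\mathbb{L}^{1}$; without it the convolution bound used in the sufficient direction collapses, and everything else is bookkeeping with partial fractions, binomial asymptotics, and character orthogonality. The lemma stated immediately before the theorem could be used in place of the convolution bound in the necessary direction by iterating the multiplication of $y$ by factors $(1-e^{-i\theta_k}z)$, but the decisive step---extracting a lower bound of order $n^{m-1}$ from the partial fraction expansion of $y$---remains the main substantive point either way.
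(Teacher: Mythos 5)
Your proof is correct, and while your sufficient direction coincides with the paper's (Wiener inversion of $q$ in $\mathbb{L}^1$, partial fractions for $\prod_k(1-e^{-i\theta_k}z)^{-1}$, and a Cauchy-product bound giving $(x_n)\in\ell^\infty$), your necessary direction takes a genuinely different route. The paper substitutes $z\mapsto e^{i\theta_1}z$, uses the lemma preceding the theorem to show that $p(z)=(1-z)\prod_{j\ge2}(1-e^{i(\theta_1-\theta_j)}z)^{m_j}x(e^{i\theta_1}z)$ is bounded on $[0,1[$, and then derives the contradiction $1=\lim_{z\to1^-}p(z)(1-z)^{m_1-1}q(e^{i\theta_1}z)=0$ from the extra vanishing factor $(1-z)^{m_1-1}$; this is a soft Abel-type radial-limit argument that never computes any coefficient. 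You instead form $y=xq$, bound $y_n$ by the convolution of a bounded sequence with an $\ell^1$ sequence, and extract the exact asymptotics $y_n=\tfrac{n^{m-1}}{(m-1)!}P(n)+O(n^{m-2})$ from the partial fraction expansion of $\prod_k(1-e^{-i\theta_k}z)^{-m_k}$, using the Ces\`aro average $\tfrac1N\sum_{n<N}|P(n)|^2\to\sum|c_{k,m}|^2>0$ to get a subsequence on which $|y_n|\gtrsim n^{m-1}$. Your version is more quantitative (it exhibits the actual growth rate $n^{m-1}$ of $x*q$, paralleling the residue computation the paper uses in its Theorem $2'$ for the case $R>1$) at the cost of the character-orthogonality step and the verification that the top partial-fraction coefficients $c_{k,m_k}$ are nonzero; the paper's version avoids all coefficient asymptotics but needs its auxiliary lemma and the continuity of $q$ on $\overline{B_1(0)}$ to pass to the radial limit. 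Both are complete proofs under the stated hypotheses.
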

\begin{proof}
First consider that $m_1=\cdots=m_s=1$. Since $q\in\mathbb{L}^1$ and $q$ is not
zero in $\overline{B_1(0)}$, by Wiener's Theorem, we have that
\beqaa\label{Inv-20}
[q(z)]^{-1}=\hat{q}(z)=\sum_{n=0}^{\infty}\hat{q}_nz^n\in \mathbb{L}^1\textrm{.}
\eeqaa
On the other hand,
\beqaa\label{Inv-30}
\sbrack{\ds\prod_{j=1}^{s}(1-e^{-i\theta_j}z)}^{-1}
=
\sum_{n=0}^{\infty}\alpha_nz^n
\quad\text{with}\quad 
\alpha_n
=
\sum_{j=1}^s\pbrack{\frac{e^{i\pbrack{\sum_{\mu\ne j}\theta_{\mu}}}}{\prod_{\mu\ne j}(e^{i\theta_{\mu}}-e^{i\theta_j})}}e^{-i\theta_j n}
\textrm{.}
\eeqaa
Since $x(z)=(1-a(z))^{-1}$ for $z\in B_1(0)$, it follows from (\ref{Inv-20}) 
and (\ref{Inv-30}) that
\beqae
x(z)
=
\pbrack{\sum_{n=0}^{\infty}\alpha_nz^n}\pbrack{\sum_{n=0}^{\infty}\hat{q}_nz^n}
\textrm{.}
\eeqae
As a result of equating coefficients, we have that 
$\ds x_n=\sum_{k=0}^{n}\alpha_{n-k}\hat{q}_k$.
Therefore
\beqae
|x_n|\leq C \sum_{k=0}^{\infty}|\hat{q}_k|\textrm{,}
\eeqae
where $C$ is a constant which is an upper bound for the sequence 
$(|\alpha_n|)$. So $(x_n)$ is bounded and then, by theorem \ref{Teo-Higidio},
the null solution of (\ref{Eq-Recu-10}) is stable. Conversely, suppose that the
null solution of (\ref{Eq-Recu-10}) is stable and $m_1\geq 2$. Replacing $z$ by
$e^{i\theta_1}z$ in $x(z)(1-a(z))=1$, it follows that
\beqae
p(z)(1-z)^{m_1-1}q\left(e^{i\theta_1}z\right)=1\textrm{,}
\quad\forall z\in B_1(0)\textrm{,}
\eeqae
with
\beqae
p(z)
=
(1-z)\left(1-e^{i\left(\theta_1-\theta_2\right)}z\right)^{m_2}
\cdots 
\left(1-e^{i\left(\theta_1-\theta_s\right)}z\right)^{m_s}x\left(e^{i\theta_1}z\right)
\textrm{.}
\eeqae
Since, by theorem \ref{Teo-Higidio}, the sequence $(x_n)$ is bounded, by 
applying the item 1 several times and finally the item 2 of the preceding lemma,
one has that $p(z)$ is bounded on $[0,1[$. Therefore
\beqae
1
=
\lim_{\underset{z\in\R \ }{z\con 1^-}}x(e^{i\theta_1}z)(1-a(e^{i\theta_1}z))
=
\lim_{\underset{z\in\R \ }{z\con 1^-}}p(z)(1-z)^{m_1-1}q(e^{i\theta_1}z)=0
\textrm{,}
\eeqae
which is absurd. Hence $m_1=1$.
\end{proof}
\begin{corollary}
Let $\ds\sum_{n=1}^{\infty}n|a_n|<\infty$. If $1-a(z)$ is not zero in $B_1(0)$
and has a finite number of zeros of order one in $|z|=1$, then the null 
solution of (\ref{Eq-Recu-10}) is stable.
\end{corollary}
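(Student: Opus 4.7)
The plan is to check the two standing hypotheses of the preceding theorem and then invoke its stability direction with all $m_j=1$. That theorem requires (i)~$a(z)$ to converge on $\overline{B_1(0)}$ and (ii)~the factor $q$ in (\ref{Pos-zero-10}) to belong to $\mathbb{L}^1$. Since $\sum_n n|a_n|<\infty$ forces $\sum_n|a_n|<\infty$, condition (i) is immediate; the whole effort goes into (ii).

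For (ii), I would avoid iterating the division one pole at a time (which costs one derivative of $a$ per step and would therefore require $\sum n^s|a_n|<\infty$ after $s$ steps) and instead split the inverse polynomial in a single stroke via partial fractions. Since the $e^{i\theta_j}$ are distinct, there exist constants $c_1,\ldots,c_s$ with
$$\frac{1}{\prod_{j=1}^s(1-e^{-i\theta_j}z)}=\sum_{j=1}^s\frac{c_j}{1-e^{-i\theta_j}z},$$
so that $q(z)=\sum_{j=1}^s c_j\,(1-a(z))/(1-e^{-i\theta_j}z)$. Because $\mathbb{L}^1$ is a vector space and $s$ is finite, it then suffices to show that each single-pole quotient $g^{(j)}(z):=(1-a(z))/(1-e^{-i\theta_j}z)$ lies in $\mathbb{L}^1$.

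The main obstacle, and the step in which the hypothesis $\sum n|a_n|<\infty$ is consumed, is the $\ell^1$ estimate for the coefficients of $g^{(j)}$. Setting $r_0=1$ and $r_k=-a_k$ for $k\geq 1$, the recursion $r_n=g^{(j)}_n-e^{-i\theta_j}g^{(j)}_{n-1}$ solves explicitly to $g^{(j)}_n=e^{-in\theta_j}+\sum_{k=1}^n e^{-i\theta_j(n-k)}r_k$. Here the simple-zero condition $1-a(e^{i\theta_j})=0$ enters decisively: it reads $1=-\sum_{k\geq 1}r_k e^{ik\theta_j}$, and substituting this into the leading term collapses the formula into a pure tail,
$$g^{(j)}_n=-\sum_{k=n+1}^{\infty}e^{i\theta_j(k-n)}r_k.$$
A Fubini interchange then yields $\sum_{n\geq 0}|g^{(j)}_n|\leq\sum_{k\geq 1}k|r_k|=\sum_{k\geq 1}k|a_k|<\infty$, so $g^{(j)}\in\mathbb{L}^1$. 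Summing over $j$ places $q\in\mathbb{L}^1$, and the preceding theorem delivers stability. The conceptual crux is the choice of the partial-fraction route over iterated division; once that is in place, the tail estimate is the only nontrivial computation.
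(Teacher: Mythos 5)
Your proof is correct and follows essentially the same route as the paper: both reduce $q$ to a linear combination of the single-pole quotients $(1-a(z))/(1-e^{-i\theta_j}z)$ via partial fractions and then bound the coefficients of each quotient by the tail sums $\sum_{k>n}|a_k|$, whose $\ell^1$ norm is controlled by $\sum_k k|a_k|$. The only cosmetic difference is that the paper first treats the zero at $z=1$ and rotates via $\tilde a(z)=a(e^{i\theta}z)$, whereas you carry out the same tail computation directly at a general $e^{i\theta_j}$.
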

\begin{proof}
Suppose that $z=1$ is a zero of $1-a(z)$ and consider
\beqae
q(z)
=
\sum_{n=0}^{\infty}q_nz^n:=\frac{1-a(z)}{1-z}
=
(1-a(z))\pbrack{\sum_{n=0}^{\infty}z^n}
\textrm{.}
\eeqae
Then $q_0=1$ and, for $n\geq 1$, one has that
\beqae
q_n=1-\sum_{k=1}^{n}a_k=\sum_{k=n+1}^{\infty}a_k\textrm{.}
\eeqae
It is easy to verify that, for $n\geq 1$, we have
\beqae
\sum_{k=0}^{n-1}q_k=\sum_{k=1}^{n}ka_k+n\sum_{k=n+1}^{\infty}a_k\textrm{.}
\eeqae
So
\beqae
\sum_{k=0}^{n-1}|q_k|
\leq 
\sum_{k=1}^{n}k|a_k|+n\sum_{k=n+1}^{\infty}|a_k|
\leq 
\sum_{k=1}^{\infty}k|a_k|
\quad 
\forall n\in\N
\textrm{.}
\eeqae
Thus $1-a(z)=(1-z)q(z)$ with $q\in\mathbb{L}^1$. On the other hand, if 
$z=e^{i\theta}$ is a zero of $1-a(z)$, one has that $z=1$ is a zero of
$1-\tilde{a}(z)$, $\tilde{a}(z)=a(e^{i\theta}z)$, which satisfies the hypotheses
of this corollary. Hence $1-\tilde{a}(z)=(1-z)\tilde{q}(z)$ with 
$\tilde{q}\in\mathbb{L}^1$ or, equivalently, $(1-a(z))=(1-e^{-i\theta}z)q(z)$,
where $q(z)=\tilde{q}(e^{-i\theta}z)$ and therefore $q\in\mathbb{L}^1$. Now 
consider the set $\{e^{i\theta_j}\,:\,j=1,\ldots,s\}$ consisting of all zeros of
$1-a(z)$. Then, by partial fractions, we have
\beqae
\frac{1-a(z)}{(1-e^{-i\theta_1}z)\cdots(1-e^{-i\theta_s})}
=
\sum_{j=1}^{s}\frac{\beta_j(1-a(z))}{(1-e^{-i\theta_j}z)}
=
\sum_{j=1}^{s}\beta_jq_j(z)
\textrm{,}
\eeqae
where each $q_j \in\mathbb{L}^1$. So, if 
$\ds q=\sum_{j=1}^{s}\beta_jq_j\in\mathbb{L}^1$, then
\beqae
1-a(z)=(1-e^{-i\theta_1}z)\cdots(1-e^{-i\theta_s}z)q(z)\textrm{.}
\eeqae
It follows from the preceding theorem that the null solution of 
(\ref{Eq-Recu-10}) is stable.
\end{proof}
\example The sequence $\ds a_n=c_0\frac{(-1)^n}{n^3}$, 
$\ds c_0:=\pbrack{\sum_{n=1}^{\infty}\frac{1}{n^3}}^{-1}$, satisfies the 
hypotheses of the previous corollary. So the null solution of (\ref{Eq-Recu-10})
is stable. Note that, in this case, theorem $\textrm{2}'$
cannot be used for obtaining this result since $R=1$.
\section{Stability via Approximation}
\noindent In this final section we state some conditions for stability via 
polynomial approximation by applying the following theorem (known as Rouch\'e's
Theorem):
\begin{theorem}
If $f$ and $f+h$ are analytic functions on $\overline{B_\rho(z_0)}$ such that
\beqae
|h(z)|<|f(z)|\quad\text{in}\quad|z|=\rho\textrm{,}
\eeqae
then $f$ and $f+h$ have the same number of zeros in $B_\rho(z_0)$.
\end{theorem}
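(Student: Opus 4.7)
The plan is to combine the argument principle with a continuous deformation. Recall that for any $g$ analytic on $\overline{B_\rho(z_0)}$ and nonvanishing on $|z-z_0|=\rho$, the number of zeros of $g$ in $B_\rho(z_0)$ (counted with multiplicity) equals $N(g)=\frac{1}{2\pi i}\oint_{|z-z_0|=\rho}g'(z)/g(z)\,dz$. My strategy is to connect $f$ to $f+h$ by the straight-line homotopy $f_t:=f+th$, $t\in[0,1]$, and to show that $N(f_t)$ is independent of $t$.

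First I would exploit the hypothesis $|h(z)|<|f(z)|$ on $|z-z_0|=\rho$, which, by compactness of the boundary and continuity of $|f|-|h|$, yields a uniform bound
\[
|f_t(z)| \;\geq\; |f(z)|-t\,|h(z)| \;\geq\; \min_{|w-z_0|=\rho}\bigl(|f(w)|-|h(w)|\bigr) \;>\; 0
\]
valid for every $t\in[0,1]$ and every $z$ on the boundary circle. In particular, $f$, $f+h$, and more generally $f_t$, do not vanish on $|z-z_0|=\rho$, so all three admit a well-defined zero count inside $B_\rho(z_0)$. Accordingly I would set
\[
\varphi(t) \;:=\; \frac{1}{2\pi i}\oint_{|z-z_0|=\rho}\frac{f'(z)+t\,h'(z)}{f(z)+t\,h(z)}\,dz, \qquad t\in[0,1],
\]
and observe, via the argument principle applied to $f_t$, that $\varphi(t)=N(f_t)\in\mathbb{Z}$.

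The main step is to prove that $\varphi$ is continuous on $[0,1]$: the integrand is jointly continuous in $(z,t)$ on the compact set $\{|z-z_0|=\rho\}\times[0,1]$ thanks to the uniform positivity of the denominator established above, so the standard continuity theorem for parameter-dependent integrals applies. A continuous $\mathbb{Z}$-valued function on a connected interval is constant, whence $N(f)=\varphi(0)=\varphi(1)=N(f+h)$. The only real obstacle is justifying this continuity, and it collapses to the uniform lower bound on $|f_t|$; once that is in hand, everything else is routine. An alternative route that sidesteps the homotopy is to factor $f+h=f\cdot(1+h/f)$ and note that, on the boundary, $1+h/f$ takes values in the open disk $B_1(1)$, which is simply connected and does not contain the origin; then $\log(1+h/f)$ admits a single-valued analytic branch, its derivative integrates to zero around the boundary, and comparing the logarithmic derivative of $f+h$ with that of $f$ again gives $N(f+h)=N(f)$.
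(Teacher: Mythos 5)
Your proof is correct. Note, however, that the paper does not prove this statement at all: it is Rouch\'e's Theorem, quoted as a classical tool for Section 4, so there is no proof in the paper to compare against. Both of your routes are standard and complete. The homotopy argument is sound: the uniform lower bound $|f+th|\geq |f|-|h|>0$ on the boundary circle (using compactness) legitimizes the argument principle for every $f_t$, gives joint continuity of the integrand on $\{|z-z_0|=\rho\}\times[0,1]$, and hence continuity of the integer-valued zero count $\varphi(t)$, which must therefore be constant. The alternative via $f+h=f\cdot(1+h/f)$ is equally valid: on the boundary the factor $1+h/f$ takes values in $B_1(1)$, so its logarithmic derivative integrates to zero (the winding number about the origin of a closed curve contained in $B_1(1)$ vanishes), and subtracting logarithmic derivatives yields $N(f+h)=N(f)$ directly, with no deformation needed. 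One cosmetic point: the theorem as printed writes the boundary condition on $|z|=\rho$ although the disk is centered at $z_0$; you silently (and correctly) read this as $|z-z_0|=\rho$, which is clearly what is intended, since the paper only ever applies the result with $z_0=0$.
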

\noindent Now, for each $n\in\N$, consider the polynomial
\beqae
p_n(z)=z^n-a_1z^{n-1}-\cdots-a_{n-1}z-a_n\textrm{,}
\eeqae
where $a_1$, $\ldots$, $a_n$ are the first $n$ coefficients of the power series
$a(z)$. Define
\beqae
r_n:=\max\{|z|\,:\,p_n(z)=0\}
\eeqae
and  $z_1,\ldots,z_n$ as the $n$ zeros of $p_n(z)$, that is,
\beqae
p(z)=(z-z_1)\ldots(z-z_n)\textrm{.}
\eeqae
In what follows we enunciate some results of stability via finite 
approximations of the characteristic equation.
\begin{theorem} If there exists an index $n$ such that
\beqae
r_n<1\quad\textrm{and}\quad\sum_{i=n+1}^{\infty}|a_i|<(1-r_n)^n\textrm{,}
\eeqae
then the null solution of (\ref{Eq-Recu-10}) is asymptotically stable.
\end{theorem}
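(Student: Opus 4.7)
The plan is to use Rouché's theorem to transfer the root-location information from the finite approximation $p_n$ to the full characteristic function $1-a(z)$, show $1-a(z)$ is nonzero on $\overline{B_1(0)}$, and then invoke Theorem~\ref{Teo-Assint-10} to conclude asymptotic stability.

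First I would note that the hypothesis $\sum_{k=n+1}^\infty|a_k|<\infty$ together with finiteness of $a_1,\ldots,a_n$ forces $(a_k)\in\ell^1$; therefore $a(z)$ converges absolutely and uniformly on $\overline{B_1(0)}$, so $1-a(z)$ is continuous on $\overline{B_1(0)}$ and analytic on $B_1(0)$. The central object is the reciprocal polynomial
\[
\tilde p_n(z):=1-a_1z-\cdots-a_nz^n=z^n p_n(1/z)=\prod_{i=1}^n(1-z_iz),
\]
obtained by distributing the factor $z^n$ among the factors of $p_n(z)=\prod(z-z_i)$. Since each $|z_i|\leq r_n<1$, the (nonzero) zeros of $\tilde p_n$ lie in $|z|\geq 1/r_n>1$, so $\tilde p_n$ has no zeros in $\overline{B_1(0)}$. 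More importantly, the factored form gives the key lower bound
\[
|\tilde p_n(z)|\geq\prod_{i=1}^n(1-|z_i|)\geq(1-r_n)^n\quad\text{for every }|z|=1.
\]

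Next I would split $1-a(z)=\tilde p_n(z)-h(z)$, where $h(z):=\sum_{k=n+1}^\infty a_kz^k$. On $|z|=1$ the hypothesis yields $|h(z)|\leq\sum_{k>n}|a_k|<(1-r_n)^n\leq|\tilde p_n(z)|$. Applying Rouché's theorem on $\overline{B_\rho(0)}$ for $\rho<1$ sufficiently close to $1$ (where the relevant strict inequality persists by continuity and where analyticity of $1-a(z)$ on the closed disk is immediate), one obtains that $\tilde p_n$ and $1-a(z)$ have the same number of zeros in $B_\rho(0)$, which is zero. Letting $\rho\to 1^{-}$ gives $1-a(z)\neq 0$ on $B_1(0)$. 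The same strict inequality, evaluated at $|z|=1$, yields $|1-a(z)|\geq|\tilde p_n(z)|-|h(z)|>0$, ruling out zeros on the unit circumference as well. Hence $1-a(z)$ is continuous and nonvanishing on $\overline{B_1(0)}$, and Theorem~\ref{Teo-Assint-10} delivers the asymptotic stability of the null solution.

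The main obstacle is the passage from the hypothesis on $r_n$ (a statement about the zeros of $p_n$) to a uniform lower bound on $|\tilde p_n(z)|$ along the unit circle; once the reciprocal-polynomial factorization $\tilde p_n(z)=\prod(1-z_iz)$ is identified, the bound $(1-r_n)^n$ is immediate from the triangle inequality, and the remainder of the argument reduces to routine applications of Rouché's theorem and the previously established Theorem~\ref{Teo-Assint-10}.
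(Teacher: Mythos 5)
Your proof is correct and follows essentially the same route as the paper: the same reciprocal polynomial (the paper's $s_n(z)=z^np_n(1/z)$ is your $\tilde p_n$), the same lower bound $(1-r_n)^n\leq|s_n(z)|$ on $|z|=1$, the same application of Rouch\'e's theorem to conclude $1-a(z)\neq 0$ on $\overline{B_1(0)}$, and the same final appeal to Theorem~\ref{Teo-Assint-10}. Your only departure is a welcome extra precaution --- applying Rouch\'e on $\overline{B_\rho(0)}$ with $\rho<1$ and treating the boundary circle by the strict triangle-inequality estimate --- which properly addresses the fact that $1-a(z)$ need only be continuous (not analytic) on the closed unit disk when $R=1$, a point the paper glosses over.
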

\begin{proof}
For $|z|=1$, we have that
\beqae
1-r_n\leq 1-|z_i|\leq|z|-|z_i|\leq|z-z_i|\textrm{,}\quad i=1,\ldots,n\textrm{.}
\eeqae
So $(1-r_n)^n\leq|p_n(z)|$ for $|z|=1$. Consider the $n$th partial sum of 
$1-a(z)$, that is,
\beqae
s_n(z):=1-\sum_{k=1}^{n}a_k z^k\textrm{.}
\eeqae
Hence, since $s_n(z)=z^np_n(1/z)$ for $z\ne0$, one has that $s_n(z)$ is not zero
in $\overline{B_1(0)}$ and $(1-r_n)^n\leq|s_n(z)|$ for $|z|=1$. Then
\beqae
\left|1-a(z)-s_n(z)\right|
=
\left|\sum_{i=n+1}^{\infty}a_iz^i\right|
\leq
\sum_{i=n+1}^{\infty}\left|a_i\right|
<
\left(1-r_n\right)^n
\leq
\left|s_n(z)\right|
\textrm{.}
\eeqae
By Rouch\'e's Theorem, $1-a(z)$ and $s_n(z)$ have the same number of zeros in
$\overline{B_1(0)}$. Therefore $1-a(z)$ is not zero in $\overline{B_1(0)}$. It
follows from theorem \ref{Teo-Assint-10} that the null solution of 
(\ref{Eq-Recu-10}) is asymptotically stable.
\end{proof}
\example Let $(\beta_n)$ be a sequence with $\beta_n\in \{-1,1\}$. The sequence
\beqae
(a_n)=\pbrack{\frac{3}{2},-\frac{9}{16},\frac{\beta_1}{20},\frac{\beta_2}{20^2},\frac{\beta_3}{20^3},\cdots}
\eeqae
does not satisfy the hypothesis of theorem \ref{Teo-Abs-10}. However, by 
considering the polynomial $p_2(z)$, we obtain $r_2=3/4$ and, since
\beqae
\sum_{i=3}^{\infty}|a_i|=\frac{1}{19}<(1-3/4)^2\textrm{,}
\eeqae
the null solution of (\ref{Eq-Recu-10}) is asymptotically stable.\\
\example The sequence
\beqae
(a_n)=\pbrack{1,-\frac{41}{36},\frac{8}{9},-\frac{34}{81},\frac{16}{81},-\frac{4}{81},\frac{1}{2\cdot 4^6},\frac{1}{2^2\cdot 4^6},\frac{1}{2^3\cdot 4^6},\cdots}
\eeqae
does not satisfy the hypothesis of theorem \ref{Teo-Abs-10}. By a computational
calculus, the values of $r_n$ and $\ds L_n:=\sum_{i={n+1}}^{\infty}|a_i|$ are as
shown in the table that follows. Note that the hypothesis of the preceding 
theorem is satisfied for $n=6$. So the null solution of (\ref{Eq-Recu-10}) is 
asymptotically stable.
\begin{center}
\renewcommand*{\arraystretch}{1.4}
\begin{tabular}{|c|c|c|c|}
\hline
    $n$ & $r_n$ & $L_n$  & $(1-r_n)^n$\\\hline
    $1$ & $1$ & - & - \\
    $2$ & $1.067$ & - & - \\
    $3$ & $1.012$ & - & - \\
    $4$ & $0.913$ & $0.24716$ & $0.00005$\\
    $5$ & $0.781$ & $0.04963$ & $0.00050$\\
    $6$ & $0.667$ & $0.00024$ & $0.00137$\\\hline
\end{tabular}
\end{center}
\begin{theorem}
If there exists an index $n$ such that $r_n>1$ and
\beqae
\sum_{i=n+1}^{\infty}|a_i|<\delta_n(\rho_0)\textrm{,}
\eeqae
where $\rho_0$ is the point that maximizes the function 
$\delta_n:[r_n^{-1},1]\con\R$ defined by
\beqae
\delta_n(\rho):=|(1-\rho|z_1|)(1-\rho|z_2|)\cdots(1-\rho|z_n|)|\textrm{,}
\eeqae
then the null solution of (\ref{Eq-Recu-10}) is unstable.
\end{theorem}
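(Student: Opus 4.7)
The plan is to apply Rouch\'e's theorem on the circle $|z|=\rho_0$ to transfer the fact that $s_n(z)$ has a zero inside $B_{\rho_0}(0)$ to the full function $1-a(z)$, and then invoke theorem \ref{Teo-NoEstavel-10} to deduce instability.

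First I would factor the partial sum $s_n(z):=1-\sum_{k=1}^{n}a_kz^k$. Using $s_n(z)=z^np_n(1/z)$ together with $p_n(z)=(z-z_1)\cdots(z-z_n)$, a direct computation gives
\[
s_n(z)=\prod_{i=1}^{n}(1-z_iz),
\]
so the zeros of $s_n$ are exactly the reciprocals $1/z_i$. In particular, since $r_n>1$, there is at least one zero $1/z_{i_0}$ of $s_n$ with $|1/z_{i_0}|=1/r_n<1$.

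Next I would establish the key lower bound on the circle $|z|=\rho$: by the reverse triangle inequality applied to each factor, $|1-z_iz|\geq|1-|z_i|\rho|$, so
\[
|s_n(z)|\geq\prod_{i=1}^{n}\bigl|1-|z_i|\rho\bigr|=\delta_n(\rho)\quad\text{for }|z|=\rho.
\]
Combined with the estimate $|(1-a(z))-s_n(z)|=\bigl|\sum_{i>n}a_iz^i\bigr|\leq\sum_{i>n}|a_i|$, valid for $|z|\leq 1$, the hypothesis $\sum_{i>n}|a_i|<\delta_n(\rho_0)$ yields the strict Rouch\'e inequality $|(1-a(z))-s_n(z)|<|s_n(z)|$ on $|z|=\rho_0$. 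Rouch\'e's theorem then forces $s_n$ and $1-a$ to have the same number of zeros in $B_{\rho_0}(0)$. Since $\delta_n(1/r_n)=0$, the hypothesis compels $\rho_0>1/r_n$, so the zero $1/z_{i_0}$ of $s_n$ actually lies strictly inside $B_{\rho_0}(0)$. Thus $1-a(z)$ has a zero in $B_{\rho_0}(0)\subseteq B_1(0)$; the finiteness of $\sum_{i>n}|a_i|$ ensures $(a_n)\in\ell^1$ (so $R\geq 1$), placing this zero in $D_a$. Theorem \ref{Teo-NoEstavel-10} then yields instability.

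The main obstacle I anticipate is the edge case $\rho_0=1$: Rouch\'e's theorem as stated requires analyticity on a neighborhood of $\overline{B_{\rho_0}(0)}$, which may fail when $R=1$. This can be remedied either by running the Rouch\'e argument on slightly smaller circles $|z|=\rho_0-\varepsilon$ and exploiting the continuity of $\delta_n$ and of the tail sum in $\rho$ to preserve the strict inequality, or by appealing to the fact that $(a_n)\in\ell^1$ makes $1-a$ continuous on $\overline{B_1(0)}$ and analytic on $B_1(0)$, which is enough for a standard continuous form of Rouch\'e.
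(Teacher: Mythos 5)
Your proposal is correct and follows essentially the same route as the paper: factor $s_n(z)=\prod_i(1-z_iz)$, bound $|s_n|\geq\delta_n(\rho_0)$ on $|z|=\rho_0$ via the reverse triangle inequality, apply Rouch\'e to transfer the zero $1/z_{i_0}$ (which lies in $B_{\rho_0}(0)$ since $\delta_n(r_n^{-1})=0$ forces $\rho_0>r_n^{-1}$) to $1-a(z)$, and conclude with theorem \ref{Teo-NoEstavel-10}. Your additional attention to the edge case $\rho_0=1$, where $1-a$ need not be analytic on a neighborhood of the closed disk, is a legitimate refinement that the paper's proof passes over silently, and your proposed fix (shrinking the circle slightly and using continuity of $\delta_n$ and the tail bound) is sound.
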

\begin{proof}
Since $\delta(r_n^{-1})=0$, one has $r_n^{-1}<\rho_0\leq 1$. If $|z|=\rho_0$,
then $|1-zz_i|\geq |1-\rho_0|z_i||$. So the partial sum considered in the 
preceding theorem satisfies
\beqae
|s_n(z)|=|1-zz_1|\cdots|1-zz_n|\geq \delta_n(\rho_0)\textrm{.}
\eeqae
Therefore, for $|z|=\rho_0$, we have that
\beqae
|1-a(z)-s_n(z)|=|\sum_{i=n+1}^{\infty}a_iz^i|\leq\sum_{i=n+1}^{\infty}|a_i|<\delta_n(\rho_0)\leq|s_n(z)|\textrm{.}
\eeqae
Now, let $j$ with $|z_j|=r_n$. Then $z_j^{-1}\in B_{\rho_0}(0)$ and
$s_n(z_j^{-1})=0$. Hence, by Rouch\'e's Theorem, $1-a(z)$ has at least a zero in
$B_{\rho_0}(0)\subset B_{1}(0)$. Therefore, by theorem \ref{Teo-NoEstavel-10}, 
the null solution of (\ref{Eq-Recu-10}) is  unstable.
\end{proof}
\noindent{\bf Remark:} Let us put the moduli of the zeros of $p_n$ in 
descending order, say $|z_1|\geq\cdots\geq|z_n|$. Assume that the hypotheses of 
the preceding theorem hold. Consider $i_0$ is the highest index with 
$|z_{i_0}|>1$. Then $|z_{i_0+1}|\leq1$. Hence one has the following estimates:
\begin{enumerate}
\item If $i_0=n$, then $E_1:=|1-|z_{n}||^n\leq\delta_n(1)\leq\delta_n(\rho_0)$.
\item If $|z_{i_0+1}|<1$, then
$
E_2
:=
\min\{|1-|z_{i_0}||^n,|1-|z_{i_0+1}||^n\}\leq\delta_n(1)\leq\delta_n(\rho_0)
$.
\item If $|z_{i_0+1}|=1$, then 
$E_3:=|1-\rho_1|z_{i_0}||^n\leq\delta_n(\rho_1)\leq\delta_n(\rho_0)$ with
$\rho_1=2/(|z_{i_0}|+1)$.
\end{enumerate}
As a consequence of the previous remark, we may state the following corollary:
\begin{corollary}
With the same assumptions of the preceding theorem, if
\beqae
\sum_{i=n+1}^{\infty}|a_i|<E\textrm{,}
\eeqae
where $E=E_1,E_2$ or $E_3$ are given as in the previous remark, then the null 
solution of (\ref{Eq-Recu-10}) is unstable.
\end{corollary}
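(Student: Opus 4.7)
The plan is to deduce the corollary directly from the preceding theorem by exhibiting, in each of the three cases of the remark, an explicit value $\rho^{*}\in[r_n^{-1},1]$ for which $\delta_n(\rho^{*})\geq E_j$. Because $\rho_0$ maximizes $\delta_n$ on this interval, this automatically yields $E_j\leq\delta_n(\rho_0)$, and the assumption $\sum_{i=n+1}^{\infty}|a_i|<E$ then implies the hypothesis of the preceding theorem, from which instability follows by invoking that theorem.

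For cases (1) and (2) I would simply take $\rho^{*}=1$. Using the descending ordering $|z_1|\geq\cdots\geq|z_n|$ of the remark, in case (1) every factor of $\delta_n(1)$ equals $|z_i|-1\geq|z_n|-1$, so $\delta_n(1)\geq(|z_n|-1)^n=E_1$. In case (2) the factors split into those with $|z_i|>1$ (each at least $|z_{i_0}|-1$) and those with $|z_i|<1$ (each at least $1-|z_{i_0+1}|$), so every factor is bounded below by $\min\{|1-|z_{i_0}||,\,|1-|z_{i_0+1}||\}$ and thus $\delta_n(1)\geq E_2$.

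The only delicate case is (3), where $|z_{i_0+1}|=1$ kills a factor at $\rho=1$ and forces the use of an interior point. I would take $\rho^{*}=\rho_1=2/(|z_{i_0}|+1)$, which is designed so that the two ``critical'' factors match up:
\beqae
\rho_1|z_{i_0}|-1=1-\rho_1=\frac{|z_{i_0}|-1}{|z_{i_0}|+1}.
\eeqae
All factors coming from $z_i$ with $|z_i|>1$ are at least $\rho_1|z_{i_0}|-1$ (note that $\rho_1|z_i|\geq\rho_1|z_{i_0}|>1$), while all factors coming from $z_i$ with $|z_i|\leq 1$ are at least $1-\rho_1$; hence $\delta_n(\rho_1)\geq|1-\rho_1|z_{i_0}||^n=E_3$.

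The main (and essentially only) point requiring a check is that $\rho_1$ lies in the admissible interval $[r_n^{-1},1]$: the upper bound is immediate from $|z_{i_0}|\geq 1$, and the lower bound $\rho_1\geq 1/r_n$ follows from $|z_{i_0}|\leq r_n$ together with $r_n\geq 1$, since $2r_n\geq r_n+1\geq|z_{i_0}|+1$. With these three lower bounds in hand, the preceding theorem finishes the proof. No genuine obstacle appears beyond the bookkeeping with the factorization of $p_n$; the key insight is the symmetric choice of $\rho_1$ in case (3), which turns the vanishing factor at $\rho=1$ into a balanced two-sided contribution at $\rho=\rho_1$.
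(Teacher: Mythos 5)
Your proposal is correct and follows essentially the same route as the paper: the corollary is obtained by combining the estimates $E_j\leq\delta_n(\rho_0)$ from the remark with the preceding theorem, which is exactly what you do. The only difference is that you also verify the remark's three inequalities in detail (including the check that $\rho_1\in[r_n^{-1},1]$), which the paper states without proof; your verifications are correct.
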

\example Consider the sequence $(a_n)$ given by
\beqae
a_1=4\textrm{,}\quad a_2=-4\textrm{,}\quad a_n=\frac{1}{2^{n-1}}\textrm{,}\quad
n\geq 3\textrm{.}
\eeqae
$(a_n)$ does not satisfy the assumptions of corollary \ref{MaiorUm-10}. However
the zeros of $p_2(z)$ are $z_1=z_2=2$. Since
\beqae
\sum_{k=3}^{\infty}|a_k|=\frac{1}{2}<|1-2|^2\textrm{,}
\eeqae
the null solution of (\ref{Eq-Recu-10}) is unstable by the previous corollary.

\end{document}